\numberwithin{equation}{section}
\newcommand{\diam}{{\rm diam}}
\newcommand{\Alex}{\text{Alex\,}}
\newcommand{\Alexnk}{\text{Alex}^n(\kappa)}
\newcommand{\dN}{\mathds{N}}
\newcommand{\dR}{\mathds{R}}
\newcommand{\dS}{\mathds{S}}
\newcommand{\cH}{\mathcal{H}}
\newcommand{\geod}[1]{[\,#1\,]}
\newcommand{\Rnkv}{\mathcal M(n,\kappa,v)}
\newcommand{\Lnkv}{\mathcal M_\infty(n,\kappa,v)}
\newcommand{\vol}[1]{\text{vol}\left(#1\right)}
\newcommand{\scup}[2]{\underset{#1}{\overset{#2}\cup}}
\newcommand{\dsju}[2]{\underset{#1}{\overset{#2}\amalg}}
\newcommand{\dsp}{\displaystyle}
\newcommand{\cnnt}[2]{{#2}^{*#1}}
\newtheorem{theorem}{Theorem}[section]
\newtheorem{property}[theorem]{Property}
\newtheorem{lemma}[theorem]{Lemma}
\newtheorem{corollary}[theorem]{Corollary}
\theoremstyle{definition}
\theoremstyle{remark}
\theoremstyle{remark}
\theoremstyle{example}
\newtheorem{example}{Example}[section]
\theoremstyle{remark}\newtheorem{conjecture}{Conjecture}[section]
\theoremstyle{remark}
\theoremstyle{remark}\newtheorem{problem}[conjecture]{Problem}
\begin{document}

\title{Lipschitz-Volume Rigidity and Globalization}

\author{Nan Li}

\thanks{The author was partially supported by the PSC-CUNY Research Award \#61533-00 49.}

\date{\today}
\maketitle

\begin{abstract}
  Let $X$ and $Y$ be length metric spaces. Let $\cH^n$ denote the $n$-dimensional Hausdorff measure. The Lipschitz-Volume Rigidity is a property that if there exists a 1-Lipschitz map $f\colon X\to Y$ and $0<\cH^n(X)=\cH^n(f(X))<\infty$, then $f$ preserves the length of path. This property holds for smooth manifolds but doesn't hold for all singular spaces. We survey the Lipschitz-Volume Rigidity Theorems on singular spaces with lower curvature bounds and discuss some related open problems.
\end{abstract}

%
%
%


\tableofcontents

\section{Introduction}\label{s:intro}

In this paper, we let $\dim_\cH$ denote the Hausdorff dimension and $\cH^n$ denote the $n$-dimensional Hausdorff measure. A map $f$ is called a path isometry if it preserves the length of path. Note that a local isometry is always a path isometry but the converse may not be true, since a path isometry is not necessary a one-to-one map. The Lipschitz-Volume Rigidity property, abbreviated as LV-Rigidity is stated as follows.

\begin{property}[LV-Rigidity]\label{p:lv-rig}
  Let $X$ and $Y$ be length metric spaces. If there exists a 1-Lipschitz map $f\colon X\to Y$ and $0<\cH^n(X)=\cH^n(f(X))<\infty$, then $f$ is a path isometry.
\end{property}

By a co-area formula type of argument, one can show that this property holds for smooth manifolds.

\begin{theorem}\label{t:LV-rig-mfd}
  The LV-Rigidity holds if $X$ and $Y$ are both closed Riemannian manifolds.
\end{theorem}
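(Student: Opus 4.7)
The plan is to deduce the theorem from the classical area formula for Lipschitz maps between Riemannian manifolds. Since $0<\cH^n(X)<\infty$ and $X$ is a closed Riemannian manifold, we must have $\dim X=n$, and similarly $\dim Y\ge n$. By Rademacher's theorem (applied in local coordinate charts), $f$ is differentiable $\cH^n$-almost everywhere on $X$, and the $1$-Lipschitz condition forces the operator norm $\|df_x\|\le 1$ at almost every $x$. Consequently the Jacobian $J_f(x)=\sqrt{\det(df_x^{T}df_x)}$, which is the product of the singular values of $df_x$, satisfies $J_f(x)\le 1$ almost everywhere.

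Next I would invoke the area formula
$$\cH^n(f(X))\le \int_{f(X)}\#f^{-1}(y)\,d\cH^n(y)=\int_X J_f(x)\,d\cH^n(x)\le \cH^n(X),$$
so the hypothesis $\cH^n(X)=\cH^n(f(X))$ forces both inequalities to be equalities. This yields two conclusions: first, $J_f(x)=1$ for $\cH^n$-almost every $x$; second, $f$ is injective outside an $\cH^n$-null set. Combined with the singular-value bound $\|df_x\|\le 1$, the equality $J_f(x)=1$ means every singular value of $df_x$ equals $1$, so $df_x$ is a linear isometric embedding at almost every $x$.

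The final step is to upgrade this almost-everywhere infinitesimal isometry to genuine length preservation along every path. Working in normal coordinates around a generic point reduces the question to the following assertion: a $1$-Lipschitz map $F\colon\Omega\subset\RR^n\to\RR^m$ whose weak differential is a linear isometric embedding almost everywhere must in fact be an affine isometric embedding, hence smooth. This is a Liouville--Reshetnyak type rigidity result for Sobolev maps constrained to have isometric differentials. Transporting back to $X$ and $Y$ (covering $X$ by finitely many normal-coordinate charts, which is possible by compactness), $f$ becomes a Riemannian local isometry, and therefore $L(f\circ\gamma)=L(\gamma)$ for every rectifiable curve $\gamma$ in $X$.

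The main obstacle I anticipate is precisely this last passage. An individual rectifiable curve has $\cH^1$-measure zero in the $n$-dimensional $X$ and, a priori, could be entirely contained in the exceptional $\cH^n$-null set where $df_x$ fails to be isometric, so no naive Fubini-along-curves argument is directly available. The rigidity input above is therefore essential: it converts the measure-theoretic condition ``$df$ is isometric almost everywhere'' into the pointwise condition ``$f$ is smooth with $df$ everywhere isometric,'' after which path-length preservation follows immediately from the fundamental theorem of calculus applied to $f\circ\gamma$.
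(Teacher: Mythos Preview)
Your approach is genuinely different from the paper's. The paper gives a short geometric argument via tube volumes: for a geodesic $\gamma\subset X$ and small $r>0$ one has $\cH^n(B_r(\gamma))=c(n)r^{n-1}\mathcal L(\gamma)+O(r^n)$; since $f$ is $1$-Lipschitz, $f(B_r(\gamma))\subseteq B_r(f(\gamma))$, and the global hypothesis $\cH^n(X)=\cH^n(f(X))$ forces $\cH^n(A)=\cH^n(f(A))$ for every measurable $A\subseteq X$, so comparing the two tube formulas and letting $r\to 0$ gives $\mathcal L(\gamma)\le\mathcal L(f(\gamma))$. No area formula, no differentiability, and no rigidity theorem are used; this is precisely why the method extends to the singular spaces that are the paper's real subject.

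Your route through the area formula is sound up to the conclusion that $df_x$ is a linear isometry for a.e.\ $x$, but the final rigidity step has two soft spots. First, the flat assertion you state---that a $1$-Lipschitz $F\colon\Omega\subset\RR^n\to\RR^m$ whose weak differential is an isometric embedding a.e.\ must be affine---is \emph{false} when $m>n$: any arc-length-parametrized Lipschitz curve $\RR\to\RR^2$ is a counterexample. The paper simply assumes $\dim X=\dim Y=n$, and you should too. Second, even with $m=n$, the reduction to the Euclidean Liouville theorem via normal coordinates is not legitimate as written: normal coordinates make the metric Euclidean only at the center point, so the Riemannian condition $df_x^{T}g_Y\,df_x=g_X$ a.e.\ does not become $DF\in O(n)$ a.e.\ in the chart. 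One needs the genuinely Riemannian version of the rigidity (via the Piola identity and Weyl's lemma, or via Reshetnyak's theory of mappings of bounded distortion at $K=1$), which is true but is a substantially heavier tool than the paper's one-paragraph tube estimate.
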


\begin{proof}
  Suppose $\dim_\cH(X)=\dim_\cH(Y)=n$. Let $L(\sigma)$ denote the length of a curve $\sigma$. Fix a geodesic $\gamma\subset X$. It's clear that $\mathcal L(\gamma)\ge \mathcal L(f(\gamma))$, since $f$ is 1-Lipschitz. For $r>0$ small, we have
  $$\vol {B_r(\gamma)}=c(n)\cdot r^{n-1}\mathcal L(\gamma)+O(r^n).$$
  Because $f(B_r(\gamma))\subseteq B_r(f(\gamma))$, we have
  $$\vol{f(B_r(\gamma))}\le \vol{B_r(f(\gamma))}\le c(n)\cdot r^{n-1}\mathcal L(f(\gamma))+O(r^n).$$
  Note that $\vol{B_r(\gamma)}=\vol{f(B_r(\gamma))}$. We have
  $$\mathcal L(\gamma)\le \mathcal L(f(\gamma))+O(r).$$
  Let $r\to 0$. We get $\mathcal L(\gamma)\le \mathcal L(f(\gamma))$.
\end{proof}

LV-Rigidity doesn't hold in general, even if $f$ is in addition a bi-Lipschitz map. Let ${\mathcal L}_d(\gamma)$ denote the length of curve $\gamma\subset X$ with respect to the metric $d$ on $X$.

\begin{example}[\cite{Li15-1}]\label{eg.cube.edge}
Given $n\ge 2$, let $(X,d)$ be an $n$-dimensional compact Riemannian manifold and $A\in X$ be a closed subset with $\cH^n(A)=0$, but $\cH^1(A)>0$. For example, $A$ can be a minimizing geodesic. Given $\lambda\in(0,1)$, let length metric $d_\lambda$ be induced by the following length structure:
$${\mathcal L}_{d_\lambda}(\sigma)
  \equiv\lambda\cdot\cH^1(\sigma\cap A)+\cH^1(\sigma\setminus A)
  ={\mathcal L}_d(\sigma)-(1-\lambda)\cdot \cH^1(\sigma\cap A).
$$
Here $\cH^1$ is the $1$-dimensional Hausdorff measure with respect to the metric $d\,|_{\,\sigma}$, re-normalized so that $\cH^1(\sigma)={\mathcal L}_d(\sigma)$. Let $f\colon (X,d)\to (X,d_\lambda)$ be the identity map. Note that $f$ is 1-Lipschitz, bi-Lipschitz onto and volume preserving. However, $f$ is not a path isometry.

If $\lambda=0$, the length structure ${\mathcal L}_{d_0}(\sigma)
\equiv\cH^1(\sigma\setminus A)$ induces a pseudometric $d_0$. By identifying the points with zero $d_0$-distance, i.e., the points in $A$, we obtain a length metric space $(X/d_0, \bar d_0)$. In this case, the projection map $f\colon X\to X/d_0$ is 1-Lipschitz onto and volume preserving, but not a bi-Lipschitz map or path isometry. It's also worth to point out that $X/d_0$ has only one singular point $p=f(A)$.
\end{example}

In the above example, $(Y,d_Y)=(X,d_\lambda)$ is a singular space. Note that $f$ is still a local isometry on the regular part $X\setminus A$. A main reason that the local isometry $f\,|_{\,X\setminus A}$ can't be extended to the entire $X$ is that not every geodesic in $(X, d_\lambda)$ can be  approximated by curves in $(X\setminus f(A), d_{X\setminus f(A)})$ with converging length. In particular, this shows that the metric completion of $(X\setminus f(A), d_{X\setminus f(A)})$ is not isometric to $(X, d_\lambda)$. In the case of $\lambda=0$, we have that Example \ref{eg.cube.edge} doesn't satisfy the following property.


\begin{property}\label{p:glob}
  Let $Y$ be a complete length metric space and $\dim_\cH(Y)=n$. Let $\mathcal U\subset Y$ be a subset with $\cH^{n-1}(Y\setminus \mathcal U)=0$, equipped with the intrinsic (length) metric $d_{\mathcal U}$. Then the metric completion of $(\mathcal U, d_{\mathcal U})$ is isometric to $(Y, d_Y)$.
\end{property}

For simplicity, let $X$ be a closed $2$-dimensional manifold and $A\subset X$ be a minimizing geodesic. If $\lambda=0$, then $p=f(A)$ is a single point. Consider $(Y,d_Y)=(X/d_0,\bar d_0)$ and $\mathcal U=Y\setminus \{p\}$. Note that the intrinsic metric on $\mathcal U$ is the same as the intrinsic metric on $X\setminus A$. Thus the metric completion of $(\mathcal U, d_{\mathcal U})$ is isometric to the metric completion of $(X\setminus A, d_{X\setminus A})$, which is not isometric to $(Y, d_Y)$ or $(X,d)$. 
Moreover, given $\lambda\in[0,1)$, let $\gamma=f(\sigma)\in Y$ be a minimizing geodesic passing through $f(A)$, where $\sigma\subset X$ is a curve with $\cH^1(\sigma\cap A)>0$. We claim that for any minimizing geodesics $\gamma_i\in Y$ that converges to $\gamma$, there is a subsequence $i\to\infty$ for which $\gamma_i\cap f(A)\neq\varnothing$. Arguing by contradiction, suppose $\gamma_i\cap f(A)=\varnothing$ for all $i\ge N$, then
$$\lim_{i\to \infty}\,{\mathcal L}_{d_\lambda}(\gamma_i)
=\lim_{i\to \infty}\,{\mathcal L}_d(\gamma_i)= {\mathcal L}_d(\gamma)
>{\mathcal L}_{d_\lambda}(\gamma).$$
This contradicts to $\dsp\lim_{i\to \infty}\,{\mathcal L}_{d_\lambda}(\gamma_i)={\mathcal L}_{d_\lambda}(\gamma)$, provided that $\gamma_i$ and $\gamma$ are all minimizing geodesics and $\gamma_i\to\gamma$. Therefore, the metric $d_\lambda$ on $Y$ can't be determined by the intrinsic metric over $\mathcal U=Y\setminus f(A)$. In the case $\lambda=0$, we have $p\in\gamma_i$ for a subsequence $i\to\infty$. Thus these $\gamma_i$ are all bifurcated geodesics.

The following is an obvious lemma. Let $\geod{xy}$ denote a geodesic connecting $x$ and $y$.

\begin{lemma}\label{l:metric_c}
  Let $(X,d)$ be a complete length metric space and $\mathcal U\subset X$ be a subset. If for every $x,y\in X$ and every $\epsilon>0$, there exist $x', y'\in X$ so that $|xx'|+|yy'|<\epsilon$ and $\geod{x' y'}\subseteq \mathcal U$, then the metric completion of $(\mathcal U, d_{\mathcal U})$ is isometric to $(X,d)$.
\end{lemma}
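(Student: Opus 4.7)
The plan is to reduce the lemma to the identity $d_{\mathcal U}(u,v)=d(u,v)$ for every $u,v\in\mathcal U$. Once this holds, $(\mathcal U,d_{\mathcal U})$ is isometric to $(\mathcal U,d|_{\mathcal U})$, which is a $d$-dense subset of the complete space $(X,d)$ (the density following from the hypothesis applied with $x=y$ and $\epsilon=1/n$), so its metric completion is precisely $(X,d)$.

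The inequality $d_{\mathcal U}(u,v)\ge d(u,v)$ is automatic from the fact that $d\le d_{\mathcal U}$ on $\mathcal U$. For the reverse, I would fix $\eta>0$, choose a rapidly decaying scale sequence $\epsilon_k=\eta/8^k$, and construct a path in $\mathcal U$ from $u$ to $v$ of length at most $d(u,v)+\eta$ by recursively applying the hypothesis. At the top level, apply it to $(u,v)$ at scale $\epsilon_1$ to obtain a geodesic segment $[u_1v_1]\subseteq\mathcal U$ of length at most $d(u,v)+\epsilon_1$, leaving two ``residual gap'' pairs $(u,u_1)$ and $(v_1,v)$ of $d$-extent less than $\epsilon_1$. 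Iterate: at level $k+1$, bridge each of the $2^k$ residual gap pairs at scale $\epsilon_{k+1}$, producing $2^k$ new geodesic segments of length at most $\epsilon_k+\epsilon_{k+1}$ and $2^{k+1}$ smaller residual gap pairs of extent less than $\epsilon_{k+1}$. Setting $\epsilon_0=d(u,v)$, the total length across all levels is bounded by $\sum_{k\ge 1}2^{k-1}(\epsilon_{k-1}+\epsilon_k)\le d(u,v)+\eta$ (a convergent geometric-type series), while the total residual $d$-extent at level $N$ is at most $2^N\epsilon_N\to 0$.

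Organize the bridges into a binary tree with root the top-level bridge and children the sub-bridges of the two residual gap pairs, then concatenate along an in-order traversal using a suitable sub-interval parameterization of $[0,1]$. Rational (finite-depth) parameters correspond to bridge endpoints in $\mathcal U$, while irrational (infinite-depth) parameters correspond to $d$-limits of nested bridge endpoints. Continuity of the resulting $\gamma\colon[0,1]\to X$ follows from the bound that, within any sub-interval corresponding to a depth-$k$ subtree, the partial length of $\gamma$ is at most $\sum_{m\ge k}2^{m-k}(\epsilon_{m-1}+\epsilon_m)\to 0$. This yields $d_{\mathcal U}(u,v)\le d(u,v)+\eta$, and letting $\eta\to 0$ closes the argument.

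The principal obstacle is showing that $\gamma$ stays entirely within $\mathcal U$ rather than in the closure $\overline{\mathcal U}=X$; the limits of nested bridge endpoints at irrational parameters lie \emph{a priori} only in $X$. One way around this is to extract from the bridge endpoints themselves a $d_{\mathcal U}$-Cauchy sequence (whose Cauchyness follows from the partial-length bound, since consecutive bridge endpoints in the traversal are joined by a geodesic in $\mathcal U$) and identify its $\hat{\mathcal U}$-limit with the canonical image of $u$, and similarly for $v$. The same partial-length bound then controls the $\hat d$-distance between these images and yields the desired inequality directly in $\hat{\mathcal U}$, without needing $\gamma$ to be pointwise inside $\mathcal U$.
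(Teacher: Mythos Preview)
The paper offers no proof of this lemma, labeling it ``obvious''. Your recursive bridging idea is natural and you correctly isolate the obstacle, but the workaround does not close. The claim that ``consecutive bridge endpoints in the traversal are joined by a geodesic in $\mathcal U$'' holds only \emph{within} a single bridge: the right endpoint of one bridge and the left endpoint of the next (in any finite-depth in-order traversal) are precisely the two ends of an unbridged residual gap, and your partial-length bound controls only the $d$-length of $\gamma$ on subintervals---hence $d$-distances---not $d_{\mathcal U}$-distances across those gaps. Concretely, to show the leftmost-endpoint sequence $(L_N)$ is $d_{\mathcal U}$-Cauchy you would write $d_{\mathcal U}(L_N,L_{N+1})\le d_{\mathcal U}(L_{N+1},R_{N+1})+d_{\mathcal U}(R_{N+1},L_N)$; the first summand is a bridge length, but the second is a $d_{\mathcal U}$-distance across a gap of $d$-extent $<\epsilon_{N+1}$, which is exactly the kind of quantity you are trying to bound in the first place. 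The recursion never bottoms out.

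It is worth noting that in the paper's applications the auxiliary lemmas (the estimate on $H_R(p,E)$ in the Ricci case and the dimension-comparison Lemma~\ref{map.dim} in the Alexandrov case) actually yield something stronger than the two-sided hypothesis of Lemma~\ref{l:metric_c}: for each fixed $p\in\mathcal U$, one can join $p$ to \emph{almost every} $q$ by a geodesic lying in $\mathcal U$. With that one-sided version the proof is genuinely immediate: given $u,v\in\mathcal U$, choose $q$ near $v$ with both $\geod{uq}\subseteq\mathcal U$ and $\geod{vq}\subseteq\mathcal U$, concatenate, and let $d(q,v)\to 0$. Under the bare two-sided hypothesis as stated, a complete argument requires more care than either the paper or your proposal provides.
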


In Section \ref{sec:LV-rig}, we survey the results related to Property \ref{p:lv-rig} and Property \ref{p:glob}. In Section \ref{sec:glob}, we discuss some open questions related to the LV-Rigidity.

\section{Lipschitz-Volume Rigidity}\label{sec:LV-rig}

It turns out that Property \ref{p:glob} holds if the curvature on singular space $Y$ is bounded from below in some sense. In this paper, we mainly focus on the singular spaces with (synthetic) lower Ricci curvature bounds or (synthetic) lower sectional curvature bounds.

\subsection{Ricci limit spaces}

Let $\Rnkv$ be the collection of $n$-dimensional Riemannian manifolds $(M,p)$ with Ricci curvature bounded from below by $-(n-1)\kappa$ and $\cH^n(B_1(p))\ge v>0$. By Cheeger-Gromov Compactness Theorem, $\Rnkv$ is pre-compact in the pointed Gromov-Hausdorff topology. Let $\Lnkv$ be the closure of $\Rnkv$.


\begin{theorem}\label{t:m.c.Ric}
  Let subset $\mathcal U\subset X\in \Lnkv$, $n\ge 2$, be equipped with the intrinsic metric $d_{\mathcal U}$. If $\cH^{n-1}(X\setminus \mathcal U)=0$, then the metric completion of $(\mathcal U, d_{\mathcal U})$ is isometric to $(X, d_X)$.
\end{theorem}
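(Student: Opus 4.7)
The plan is to verify the hypothesis of Lemma \ref{l:metric_c}: for every $x, y \in X$ and every $\epsilon > 0$, I would produce $x', y' \in X$ with $|xx'| + |yy'| < \epsilon$ and a minimizing geodesic $\geod{x'y'} \subset \mathcal{U}$. The strategy is to work inside $B_r(x) \times B_r(y)$ for $r < \epsilon/2$ and show that the set of ``good pairs'' --- those admitting a minimizing geodesic disjoint from $X \setminus \mathcal{U}$ --- has $\cH^n \otimes \cH^n$-measure close to the total $\asymp r^{2n}$ (by Bishop--Gromov and the non-collapsing assumption), so the good set is nonempty.

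The main analytic tool is the Cheeger--Colding segment inequality, which by Gromov--Hausdorff approximation from $\Rnkv$ together with Arzelà--Ascoli compactness for minimizing geodesics extends to all $X \in \Lnkv$: for a Borel set $A \subset X$ and a measurable selection of minimizing geodesics $\gamma_{x'y'}$,
\[
\int_{B_r(x)\times B_r(y)} \mathcal{L}(\gamma_{x'y'} \cap A)\, d\cH^n(x')\, d\cH^n(y') \;\le\; C_0(n,\kappa,r,|xy|) \cdot \cH^n(A).
\]
Since $\cH^{n-1}(X \setminus \mathcal{U}) = 0$, I would cover $X \setminus \mathcal{U}$ dyadically by balls $\{B_{r_i}(p_i)\}_{i \in \mathcal{C}_k}$ with $r_i \in [2^{-k-1}, 2^{-k}]$ and total $(n-1)$-content $\sum_k \delta_k < \delta$, where $\delta_k := \sum_{i \in \mathcal{C}_k} r_i^{n-1}$. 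Let $A_k := \bigcup_{i \in \mathcal{C}_k} B_{2r_i}(p_i)$; then $\cH^n(A_k) \lesssim 2^{-k}\delta_k$.

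The crucial geometric input is a triangle-inequality observation: if a unit-speed minimizing geodesic $\gamma$ meets some $B_{r_i}(p_i)$ at an interior time $t_0$ (so both endpoints lie outside $B_{2r_i}(p_i)$), then $\gamma([t_0 - r_i, t_0 + r_i]) \subset B_{2r_i}(p_i)$, whence $\mathcal{L}(\gamma \cap A_k) \ge 2r_i \ge 2^{-k}$. Markov's inequality applied to the segment inequality with $A = A_k$ then bounds the scale-$k$ bad pairs (excluding endpoint intrusions) by $2^k C_0 \cH^n(A_k) \lesssim C_0\delta_k$; summing over $k$ gives a total $\lesssim C_0\delta$. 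The residual endpoint contribution, where $x'$ or $y'$ itself lies in $\bigcup_k A_k$, is bounded by $2 r^n \cH^n(\bigcup_k A_k) \lesssim r^n \delta$. Choosing $\delta$ sufficiently small relative to $r^{2n}$ produces a good pair $(x',y')$ whose minimizing geodesic avoids every $B_{r_i}(p_i)$ and therefore lies in $\mathcal{U}$.

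The principal obstacle is transferring the segment inequality and a measurable geodesic selection to the singular limit $X \in \Lnkv$; both are classical but require a careful Gromov--Hausdorff approximation argument using Arzelà--Ascoli compactness for the space of minimizing geodesics and the continuity of $\cH^n$ under non-collapsed limits (Cheeger--Colding). A minor subtlety is the need to balance the scale-$k$ segment-inequality bound against the guaranteed minimum length $2^{-k}$ inside $A_k$; the dyadic decomposition handles this automatically, so no additional delicate refinement of the cover of $X \setminus \mathcal{U}$ is required.
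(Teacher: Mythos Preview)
Your proposal is correct and, like the paper, reduces everything to the hypothesis of Lemma~\ref{l:metric_c}. The route, however, is more elaborate than the paper's. The paper fixes a single endpoint $p$ and uses only the Bishop--Gromov ``shadow'' estimate
\[
\cH^n\bigl(\{y\in B_1(p):\geod{py}_\infty\cap B_r(x)\neq\varnothing\}\bigr)\le c(n,\kappa,r_0)\,r^{n-1},
\]
which passes to the limit by volume convergence alone. Covering $E=X\setminus\mathcal U$ by balls of small total $(n{-}1)$-content and summing these shadows immediately gives $\cH^n(\{y:\geod{py}_\infty\text{ meets }E\})=0$, with no dyadic decomposition, no Markov step, and no need to transfer the full segment inequality or a measurable geodesic selection to $X$. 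Your segment-inequality approach works, but because that inequality controls \emph{length} rather than \emph{incidence}, you must manufacture the correct $r^{n-1}$ scaling by hand via the observation that a geodesic hitting $B_{r_i}$ spends length $\gtrsim r_i$ in $B_{2r_i}$, and then balance this against $\cH^n(A_k)$ scale by scale. The paper's one-sided shadow bound already lives at the right codimension, so all of that bookkeeping --- and the passage of the segment inequality to the limit that you flag as the principal obstacle --- is simply avoided.
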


If $\mathcal U$ is open, then the above theorem follows from Lemma \ref{l:metric_c} and the proof of Theorem 3.7 in \cite{CC00-II}. For the general case, it follows from Lemma \ref{l:metric_c} and the following lemma. Assume $M_i\to X$. A geodesic $\gamma\subset X$ is called a limit geodesic if there exists geodesics $\gamma_i\subset M_i$ for which $\gamma_i\to \gamma$. We let $\geod{pq}_\infty$ denote the collection of limit geodesics connecting $p$ and $q$. Given $p\in X$ and $E\subseteq X$, define $H_R(p,E)=\{y\in \bar B_R(p)\colon (\geod{py}_\infty\cap E)\setminus\{p\}\neq\varnothing\}$.

\begin{lemma}
  Let $p\in X\in \Lnkv$ and $E\subseteq X$. If $\cH^{n-1}(E)=0$, then $\cH^n(H_R(p,E))=0$.
\end{lemma}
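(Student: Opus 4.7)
The plan is a covering argument. Since $\cH^{n-1}(E)=0$, for every $\delta>0$ I would cover $E\setminus\{p\}$ by countably many balls $\{B_{r_j}(z_j)\}$ with $z_j\in E$ and $r_j\le|pz_j|/2$, arranged along dyadic annuli $A_k=E\cap\{2^{-k-1}<|p\cdot|\le 2^{-k}\}$ so that the $k$-th annulus receives a geometrically small budget. The whole lemma would then reduce to the pointwise estimate
\[
   \cH^n\bigl(H_R(p,B_r(z))\bigr)\;\le\;C(n,\kappa,R)\bigl(r/|pz|\bigr)^{n-1}
   \qquad\text{whenever}\;\;r\le|pz|/2,
\]
because summing $C(r_{k,j}/|pz_{k,j}|)^{n-1}\le C\cdot 2^{(k+1)(n-1)}r_{k,j}^{n-1}$ first in $j$ and then in $k$, with the budgets chosen so the geometric factor cancels, gives $\cH^n(H_R(p,E))\le C\delta$ for every $\delta>0$.

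To prove the pointwise estimate I would invoke a single-basepoint version of the Cheeger--Colding segment inequality on $\Lnkv$, using a Kuratowski-measurable selection $y\mapsto\gamma_{py}\in\geod{py}_\infty$ chosen so that $\gamma_{py}\cap B_r(z)\ne\varnothing$ whenever possible. A polar-type Bishop--Gromov calculation around $p$, comparing the generalized Jacobian to $s_\kappa(t)^{n-1}$, yields the integral bound
\[
   \int_{B_R(p)}\cH^1\!\bigl(\gamma_{py}\cap B_{2r}(z)\bigr)\,dy
   \;\le\;C(n,\kappa,R)\,\frac{r^n}{|pz|^{n-1}}.
\]
On the other hand, for any $y\in H_R(p,B_r(z))\setminus B_{2r}(z)$ the selected $\gamma_{py}$ enters and exits $B_{2r}(z)$ (both $p$ and $y$ being outside, because $|pz|\ge 2r$) while dipping into the smaller ball $B_r(z)$, and the triangle inequality applied to the entry, innermost, and exit points forces $\cH^1(\gamma_{py}\cap B_{2r}(z))\ge 2r$. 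Dividing gives $\cH^n(H_R(p,B_r(z))\setminus B_{2r}(z))\le C(r/|pz|)^{n-1}$, and the trivial Bishop--Gromov bound on $\cH^n(B_{2r}(z))$ is absorbed into the same estimate.

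The main obstacle is justifying the single-basepoint form of segment inequality on $X\in\Lnkv$: the standard Cheeger--Colding statement averages over both endpoints, but the non-collapsed hypothesis $v>0$ is exactly what supplies the Bishop--Gromov lower bound on the generalized Jacobian needed to fix one endpoint. Both the inequality and the measurable geodesic selection pass from the approximating manifolds $M_i\to X$ to $X$ by a diagonal argument, using that by definition $\geod{py}_\infty$ consists of pointwise limits of minimizing $M_i$-geodesics. The radius constraint $r\le|pz|/2$ built into the cover is equally essential, since $H_R(p,B_r(z))$ can otherwise swell to cover most of $\bar B_R(p)$ when $z$ is close to $p$, and the pointwise estimate would become vacuous.
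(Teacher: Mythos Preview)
Your proposal is correct and rests on the same core idea as the paper: cover $E$ by small balls, bound the $\cH^n$-measure of the ``shadow'' $H_R(p,B_r(z))$ of each ball by a multiple of $r^{n-1}$ via Bishop--Gromov, and sum. The paper, however, implements this more simply in two places.

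First, instead of a dyadic decomposition of $E$ into annuli $\{2^{-k-1}<|p\cdot|\le 2^{-k}\}$ with carefully chosen budgets, the paper just truncates to a single annulus $E_\epsilon=E\cap(B_{1-\epsilon}(p)\setminus B_\epsilon(p))$, on which $|pz|\ge\epsilon$ uniformly so that the pointwise constant is uniform; the leftover pieces $H_R(p,E)\setminus H_R(p,E_\epsilon)$ are trapped in $\bar B_\epsilon(p)\cup(\bar B_R(p)\setminus B_{R-\epsilon}(p))$, whose volume goes to zero with $\epsilon$.

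Second, and more significantly, the paper obtains the shadow estimate $\cH^n(A_r(p,z))\le c(n,\kappa,|pz|)\,r^{n-1}$ directly from Bishop--Gromov on the approximating manifolds together with Cheeger--Colding volume convergence, with no appeal to the segment inequality, measurable geodesic selections, or the entry/exit length lower bound. Your route via $\int_{B_R(p)}\cH^1(\gamma_{py}\cap B_{2r}(z))\,dy$ is valid, but note that proving this integral bound itself already amounts to the polar Bishop--Gromov computation, so dividing by $2r$ afterward just recovers what one had before integrating. The measurable-selection and limit-passage issues you flag as the ``main obstacle'' are thus self-imposed; the paper sidesteps them entirely by working on the manifolds and passing the volume inequality to the limit.
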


\begin{proof}
  Not losing generality, we assume $R=1$. We will need the following Bishop-Gromov volume comparison on the limit space $X$. Given $p,x\in X$ with $0<d(p,x)=r_0<1$ and $0<r<\min\{r_0, 1-r_0\}/100$, let $A_r(p,x)=\{y\in B_1(p)\colon\geod{py}_\infty\cap B_r(x)\neq\varnothing\}$.
Then
\begin{align}
  \mathcal H^n(A_r(p,x))\le c(n,\kappa,r_0)r^{n-1}.
\end{align}
This follows from Bishop-Gromov comparison on manifolds and the volume convergence theorem in \cite{CC97-I}.


Now for any $\epsilon>0$, cover $E_\epsilon=E\cap (B_{1-\epsilon}(p)\setminus B_\epsilon(p))$ by countably many balls $\{B_{r_\alpha}(x_\alpha)\}$ for which $\sum r_\alpha^{n-1}<\epsilon$ and $r_\alpha<\epsilon\le\min\{d(p,x_\alpha), 1-d(p,x_\alpha)\}/100$. Note that $H_1(p,E_\epsilon)\subseteq\cup_\alpha A_{r_\alpha}(p,x_\alpha)$. We have
$$\cH^n(H_1(p,E_\epsilon))\le \sum_\alpha \mathcal H^n(A_{r_\alpha}(p,x_\alpha))\le \sum_\alpha c(n,\kappa,r_0)r_\alpha^{n-1}<c(n,\kappa,r_0)\epsilon.$$
Therefore,
$$\cH^n(H_1(p,E))\le\cH^n(H_1(p,E_\epsilon))+\cH^n(\bar B_\epsilon(p))+\cH^n(\bar B_1(p)\setminus B_{1-\epsilon}(p))\le c(n,\kappa,r_0)\epsilon.$$
\end{proof}

For LV-Rigidity, we have the following theorem.
\begin{theorem}[\cite{LW14}]\label{t:LW-rig}
  Let $X, Y\in \Lnkv$ and $n\ge 2$. Suppose that there is a 1-Lipschitz map $f\colon X\to Y$. If $\cH^n(X)=\cH^n(f(X))$, then $f$ is an isometry with respect to the intrinsic metrics of $X$ and $f(X)$. In particular, if $f$ is also onto, then $Y$ is isometric to $X$.
\end{theorem}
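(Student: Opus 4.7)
The plan is to separate the theorem into infinitesimal rigidity (at a.e. regular point $f$ looks Euclidean) and then globalization via Theorem \ref{t:m.c.Ric}.

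First I would extract measure information from the hypothesis. Since $f$ is $1$-Lipschitz, $\cH^n(f(E))\le\cH^n(E)$ for every Borel $E\subseteq X$; combined with countable subadditivity of $\cH^n$ on $f(X)$ and the hypothesis $\cH^n(X)=\cH^n(f(X))$, this inequality must be sharp for every Borel $E$, and the sets $f(E)$ and $f(X\setminus E)$ meet in an $\cH^n$-null set. In particular, $f$ is essentially injective. Using the Cheeger-Colding structure theory for non-collapsed Ricci limits, the regular sets $\cR_X\subset X$ and $\cR_Y\subset Y$ (those points with unique tangent cone isometric to $\RR^n$) have full $\cH^n$ measure, and their complements have Hausdorff codimension at least $2$; by essential injectivity, $\cR := \cR_X\cap f^{-1}(\cR_Y)$ also has full measure in $X$.

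Next I would blow up at each $x\in\cR$. As $\epsilon\to 0$, the pointed spaces $(X,x,\epsilon^{-1}d_X)$ and $(Y,f(x),\epsilon^{-1}d_Y)$ converge in the pointed Gromov-Hausdorff sense to $(\RR^n,0)$, and passing $f$ to a subsequential limit yields a $1$-Lipschitz map $\phi\colon\RR^n\to\RR^n$ with $\phi(0)=0$. Colding's volume convergence theorem (already used for Theorem \ref{t:m.c.Ric}) passes the sharp equality $\cH^n(f(B_\epsilon(x)))=\cH^n(B_\epsilon(x))$ to the limit, so $\phi$ is volume-preserving on $\RR^n$. A standard Rademacher argument then forces the differential $D\phi\in O(n)$ a.e., so $\phi$ is a Euclidean isometry. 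From this infinitesimal rigidity I would upgrade to a statement on balls of definite size: combining volume-cone-implies-metric-cone with the Cheeger-Colding $\epsilon$-splitting theorem, one produces an open set $\mathcal U\subseteq X$ on which $f$ is a local isometry, with $\cH^{n-1}(X\setminus\mathcal U)=0$ and likewise $\cH^{n-1}(Y\setminus f(\mathcal U))=0$ (this last estimate uses the $H_R(p,\cdot)$ bound proved right before the theorem).

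Finally I would globalize. By Theorem \ref{t:m.c.Ric}, the metric completion of $(\mathcal U,d_\mathcal U)$ is isometric to $X$, and the analogous statement on the $Y$-side recovers $f(X)$ from $f(\mathcal U)$. Given any $x_1,x_2\in X$ and $\delta>0$, Lemma \ref{l:metric_c} supplies $x_1',x_2'\in\mathcal U$ within $\delta$ of $x_1,x_2$ and joined by a geodesic $\geod{x_1'x_2'}\subseteq\mathcal U$; since $f$ is a local isometry on $\mathcal U$ it preserves the length of this geodesic, so the intrinsic distance from $f(x_1')$ to $f(x_2')$ in $f(X)$ equals $d_X(x_1',x_2')$. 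Letting $\delta\to 0$ and combining with the $1$-Lipschitz bound yields equality of the intrinsic distances, which is the conclusion. The main obstacle is the middle step: promoting the Euclidean tangent map $\phi$ to an honest local isometry on a ball of definite radius. On smooth manifolds this would follow from the inverse function theorem, but in the singular setting of $\Lnkv$ one must invoke the full Cheeger-Colding quantitative rigidity toolkit (segment inequality, almost-splitting, volume-cone-implies-metric-cone), and it is precisely here that the lower Ricci curvature bound---rather than any abstract metric-measure assumption---plays its essential role.
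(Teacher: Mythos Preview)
The paper does not actually prove this theorem; it is quoted from \cite{LW14} without proof, so there is no ``paper's own proof'' to compare against. That said, the placement of Theorem~\ref{t:m.c.Ric} immediately before the statement strongly suggests that the intended argument reduces to a path-isometry statement on a full-measure regular set and then globalizes via Theorem~\ref{t:m.c.Ric}, and your outline follows exactly that architecture. Your measure step (sharp equality $\cH^n(f(E))=\cH^n(E)$ on every Borel set, hence essential injectivity) and your final globalization step are both correct and standard.

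The genuine gap is your middle step. You claim that from the blow-up isometry $\phi$ you can ``upgrade to a local isometry on a ball of definite radius'' by invoking volume-cone-implies-metric-cone and the $\epsilon$-splitting theorem. Neither of those tools says anything about the map $f$; they describe the intrinsic geometry of $X$ and $Y$ separately. Knowing that the tangent map at every regular point is a Euclidean isometry does \emph{not}, on its own, produce an open set on which $f$ is an honest local isometry in the metric sense---even for Lipschitz maps $\RR^n\to\RR^n$ the passage from ``$Df\in O(n)$ a.e.'' to ``local isometry'' requires an additional argument, and on singular spaces there is no inverse function theorem to fall back on. You correctly flag this as the main obstacle, but the tools you name do not resolve it.

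A route that avoids this difficulty---and is closer in spirit to the manifold proof (Theorem~\ref{t:LV-rig-mfd}) the paper gives explicitly---is to bypass ``local isometry'' entirely and work directly with tube volumes. For a geodesic $\gamma\subset X$ whose interior lies in the regular set (Theorem~\ref{t:m.c.Ric} guarantees a dense family of such geodesics), volume convergence at regular points gives the lower tube bound $\cH^n(B_r(\gamma))\ge(1-\epsilon)\,\omega_{n-1}r^{n-1}\cL(\gamma)$ for small $r$, while Bishop--Gromov on $Y$ gives the upper bound $\cH^n(B_r(f(\gamma)))\le(1+\epsilon)\,\omega_{n-1}r^{n-1}\cL(f(\gamma))$. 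Since $f$ is $1$-Lipschitz and measure-preserving, $\cH^n(B_r(\gamma))=\cH^n(f(B_r(\gamma)))\le\cH^n(B_r(f(\gamma)))$, whence $\cL(\gamma)\le\cL(f(\gamma))$; together with $1$-Lipschitz this yields $\cL(\gamma)=\cL(f(\gamma))$ on this dense family, and Theorem~\ref{t:m.c.Ric} finishes. This is the argument your outline should converge to once the appeal to $\epsilon$-splitting is replaced by the tube estimate.
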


\begin{corollary}[\cite{LW14}]\label{t:LW-a.rig}
  For any $n\ge 2$, $\kappa,v,D>0$ and $\epsilon>0$, there exists $\delta=\delta(n,\kappa,v,D,\epsilon)>0$ such that the following holds for any $X,Y\in \Lnkv$ that satisfies $\max\{\diam(X), \diam (Y)\}\le D$ and $|\cH^n(X)-\cH^n(Y)|<\delta$. If map $f:X\to Y$ satisfies $|f(x)f(y)|_Y\le |xy|_X+\delta$, for all $x,y\in X$, then $|f(x)f(y)|_Y\ge |xy|_X-\epsilon$ for any $x,y\in X$. Consequently, $f$ is an $\epsilon$-Gromov-Hausdorff approximation.
\end{corollary}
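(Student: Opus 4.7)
The strategy is contradiction together with Gromov--Hausdorff compactness, using Theorem \ref{t:LW-rig} on the limit map. Fix $n \ge 2$, $\kappa, v, D > 0$ and $\epsilon > 0$, and assume no such $\delta$ exists. Then there are $\delta_i \searrow 0$, spaces $X_i, Y_i \in \Lnkv$ with $\diam(X_i), \diam(Y_i) \le D$ and $|\cH^n(X_i) - \cH^n(Y_i)| < \delta_i$, and maps $f_i : X_i \to Y_i$ satisfying $|f_i(x)f_i(y)|_{Y_i} \le |xy|_{X_i} + \delta_i$ for all $x, y$, yet violating the conclusion: either some pair $x_i, y_i \in X_i$ has $|f_i(x_i)f_i(y_i)|_{Y_i} < |x_iy_i|_{X_i} - \epsilon$, or (for the GH-approximation claim) some $z_i \in Y_i$ satisfies $d(z_i, f_i(X_i)) \ge \epsilon$.

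By Gromov's precompactness theorem (invoking the uniform Ricci lower bound and the diameter bound) together with the closure of $\Lnkv$ under Gromov--Hausdorff limits, after passing to a subsequence one has $X_i \to X_\infty$ and $Y_i \to Y_\infty$ in the GH topology, with $X_\infty, Y_\infty \in \Lnkv$. Colding's volume continuity theorem for non-collapsed Ricci limits then gives $\cH^n(X_\infty) = \lim_i \cH^n(X_i) = \lim_i \cH^n(Y_i) = \cH^n(Y_\infty)$. The uniform modulus of continuity $|f_i(x)f_i(y)|_{Y_i} \le |xy|_{X_i} + \delta_i$ permits an Arzel\`a--Ascoli-type extraction adapted to GH convergence (compose the $f_i$ with the GH $\epsilon_i$-approximations between $X_\infty$ and $X_i$ and between $Y_i$ and $Y_\infty$; the resulting self-maps of fixed domains are equicontinuous, and a diagonal subsequence converges uniformly on a countable dense set). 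This yields a $1$-Lipschitz limit $f_\infty : X_\infty \to Y_\infty$, and a further subsequence lets the violating data converge to $x_\infty, y_\infty \in X_\infty$ with $|f_\infty(x_\infty)f_\infty(y_\infty)|_{Y_\infty} \le |x_\infty y_\infty|_{X_\infty} - \epsilon$ (or $z_\infty \in Y_\infty$ with $d(z_\infty, f_\infty(X_\infty)) \ge \epsilon$).

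To close the argument by Theorem \ref{t:LW-rig} applied to $f_\infty$, one must verify the hypothesis $\cH^n(X_\infty) = \cH^n(f_\infty(X_\infty))$. The inequality $\cH^n(f_\infty(X_\infty)) \le \cH^n(X_\infty)$ is automatic from $1$-Lipschitzness. The image $f_\infty(X_\infty) \subseteq Y_\infty$ is compact, so if it were a proper subset its complement would contain an open ball; Bishop--Gromov on the non-collapsed Ricci limit $Y_\infty$ then endows that ball with positive $\cH^n$-measure, contradicting $\cH^n(f_\infty(X_\infty)) < \cH^n(Y_\infty) = \cH^n(X_\infty)$ once one rules out image-volume loss along the sequence --- the quantitative hypothesis $|\cH^n(X_i) - \cH^n(Y_i)| < \delta_i$ together with the same Bishop--Gromov ball bound forces $f_i(X_i)$ to be $o(1)$-dense in $Y_i$, so the limit $f_\infty$ is genuinely surjective. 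With $\cH^n(X_\infty) = \cH^n(f_\infty(X_\infty))$ verified, Theorem \ref{t:LW-rig} declares $f_\infty$ an isometry $X_\infty \to Y_\infty$, giving both $|f_\infty(x_\infty)f_\infty(y_\infty)|_{Y_\infty} = |x_\infty y_\infty|_{X_\infty}$ and $f_\infty(X_\infty) = Y_\infty$, contradicting both possible failure modes.

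The principal obstacle in this plan is the surjectivity/volumetric step that enables the invocation of Theorem \ref{t:LW-rig}: a generic $1$-Lipschitz map between non-collapsed Ricci limits of equal total $\cH^n$-measure need not be surjective in general (for example, a constant map), so the proof must extract from the almost-Lipschitz condition on $f_i$ together with $|\cH^n(X_i) - \cH^n(Y_i)| < \delta_i$ enough control over $\cH^n(f_i(X_i))$, via Bishop--Gromov on $Y_i$, to guarantee almost-density of $f_i(X_i)$ in $Y_i$ and hence a genuinely surjective limit $f_\infty$. Once this volumetric non-degeneration is secured, the rigidity input of Theorem \ref{t:LW-rig} finishes the proof.
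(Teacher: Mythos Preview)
The paper does not supply its own proof of this corollary; it is simply cited from \cite{LW14} as a consequence of Theorem \ref{t:LW-rig}. Your contradiction-plus-compactness reduction to Theorem \ref{t:LW-rig} is exactly the expected derivation, and the compactness and convergence steps (Gromov precompactness, Colding volume continuity, Arzel\`a--Ascoli for the $f_i$) are set up correctly.

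However, the obstacle you flag at the end is real, and your proposed resolution does not work. You claim that $|\cH^n(X_i)-\cH^n(Y_i)|<\delta_i$ together with Bishop--Gromov forces $f_i(X_i)$ to be $o(1)$-dense in $Y_i$. This is false: take $X_i=Y_i$ to be any fixed space in $\Lnkv$ with $\diam>\epsilon$ and let $f_i$ be a constant map. Then $|f_i(x)f_i(y)|=0\le|xy|+\delta_i$ for all $x,y$, and $\cH^n(X_i)=\cH^n(Y_i)$ exactly, so every hypothesis of the corollary is satisfied, yet $f_i(X_i)$ is a single point and the conclusion $|f_i(x)f_i(y)|\ge|xy|-\epsilon$ fails. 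The almost-nonexpanding condition alone carries no information preventing image collapse, and the stated volume hypothesis compares $\cH^n(X)$ with $\cH^n(Y)$, not with $\cH^n(f(X))$.

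So the gap is not in your strategy but in the statement as transcribed in this survey: it is missing a hypothesis, most likely either that $f$ is onto or that $|\cH^n(X)-\cH^n(f(X))|<\delta$ (which would parallel the hypothesis $\cH^n(X)=\cH^n(f(X))$ of Theorem \ref{t:LW-rig}). With either of those added, your argument closes cleanly: the limit $f_\infty$ is onto with $\cH^n(X_\infty)=\cH^n(f_\infty(X_\infty))$, Theorem \ref{t:LW-rig} makes $f_\infty$ an isometry, and both failure modes are contradicted. Consult \cite{LW14} for the precise original formulation.
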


A special case of Corollary \ref{t:LW-a.rig} was proved by Bessi\`{e}res, Besson, Courtois, and Gallot in \cite{BBGG}. They used it to prove the following stability theorem.

\begin{theorem}[\cite{BBGG}]\label{t:BBGG}
  For any integer $n\ge 3$ and $D>0$, there exists $\epsilon(n,D)>0$ so that the following holds. Let $(Y,g)\in \mathcal M(n,-1,v)$ and $(X,g_0)$ be an $n$-dimensional closed hyperbolic manifold with $\diam(X)\le D$. Suppose that there exists a degree-one map $f\colon Y\to X$. Then $vol_g(Y)\le (1+\epsilon)vol_{g_0}(X)$ if and only if $f$ if homotopic to a diffeomorphism.
\end{theorem}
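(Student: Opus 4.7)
The substantive content is the forward direction, that the volume inequality $\text{vol}_g(Y)\le (1+\epsilon)\text{vol}_{g_0}(X)$ forces $f$ to be homotopic to a diffeomorphism; the converse follows from the Besson–Courtois–Gallot (BCG) entropy rigidity framework. My plan is to combine the BCG natural map construction with Corollary \ref{t:LW-a.rig}.

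First, I would replace $f$ within its homotopy class by the BCG natural map $F : (Y,g) \to (X,g_0)$ built by barycenters on the ideal boundary of $\widetilde{X}=\mathbb{H}^n$. Because $\Ric_g \ge -(n-1)$, Bishop–Gromov gives the volume entropy bound $h(g)\le n-1$, and the BCG Jacobian inequality then yields
$$|\mathrm{Jac}(F)(y)|\le \left(\tfrac{h(g)}{n-1}\right)^n\le 1$$
pointwise, together with explicit bounds on $dF$. Since $\deg F=1$, integrating gives
$$\text{vol}_{g_0}(X) \,=\, \int_Y \mathrm{Jac}(F)\,dv_g \,\le\, \int_Y |\mathrm{Jac}(F)|\,dv_g \,\le\, \text{vol}_g(Y) \,\le\, (1+\epsilon)\text{vol}_{g_0}(X),$$
so $1-|\mathrm{Jac}(F)|$ is small in $L^1$. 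Combining this $L^1$ pinching with the BCG differential estimates, I would upgrade this to the pointwise almost-Lipschitz statement $|F(x)F(y)|_X \le |xy|_Y + \delta(\epsilon)$, with $\delta(\epsilon)\to 0$.

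With $\delta$ small, I would then apply Corollary \ref{t:LW-a.rig} (more precisely, the special hyperbolic-target version that BBGG establish as the technical heart of their paper) to $F$: the conclusion is that $F$ is an $\epsilon'(\epsilon)$-Gromov–Hausdorff approximation. Combined with the uniform $C^1$ control inherent to the BCG construction, this forces $|\mathrm{Jac}(F)|$ to stay uniformly close to $1$, so $F$ is a smooth local diffeomorphism. A degree-one local diffeomorphism between closed oriented $n$-manifolds is a diffeomorphism, and therefore $f\simeq F$ is homotopic to a diffeomorphism.

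The main obstacle I expect lies in the second paragraph: the passage from the integral Jacobian deficit $\int_Y (1-|\mathrm{Jac}(F)|)\,dv_g \le \epsilon\,\text{vol}_{g_0}(X)$ to a \emph{uniform} pointwise almost-Lipschitz bound needed to trigger Corollary \ref{t:LW-a.rig}. This requires exploiting the rigid structure of the barycenter map and the symmetric bilinear forms on $\partial_\infty \mathbb{H}^n$, and it is precisely the special case of Corollary \ref{t:LW-a.rig} that BBGG had to isolate and prove in order to close the argument.
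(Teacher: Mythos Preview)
The paper does not give its own proof of this theorem; it merely quotes the result from \cite{BBGG} and observes that a special case of Corollary~\ref{t:LW-a.rig} was the technical ingredient those authors isolated. Your outline is essentially the strategy of \cite{BBGG} itself: replace $f$ by the BCG barycenter map, use $\Ric\ge -(n-1)$ to get $h(g)\le n-1$ and hence $|\mathrm{Jac}\,F|\le 1$, deduce the almost-Lipschitz and volume-pinching conditions, and feed these into a Lipschitz--volume rigidity statement to force $F$ to be close to an isometry, hence a diffeomorphism. So your proposal is consistent with what the present paper says about the BBGG argument, but there is no in-paper proof to compare against.

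One caution on the logic of your sketch: you invoke Corollary~\ref{t:LW-a.rig} as if it were available to BBGG, but historically and logically it runs the other way. BBGG proved directly the special case they needed (hyperbolic target), and the present paper records Corollary~\ref{t:LW-a.rig} as a later generalization. Also, the step you flag as the main obstacle---passing from $L^1$ Jacobian deficit to a uniform almost-$1$-Lipschitz bound---is not quite how \cite{BBGG} proceeds; the BCG map already comes with a pointwise bound $\|dF\|\le C$ and $|\mathrm{Jac}\,F|\le 1$, so $F$ is globally Lipschitz from the outset, and the delicate part is rather the converse almost-isometry estimate and upgrading a GH-approximation to an actual diffeomorphism. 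Your last paragraph conflates these two issues.
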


\subsection{Alexandrov spaces}

Let $\Alex^n(\kappa)$ denote the collection of $n$-dimensional Alexandrov spaces with curvature $\ge\kappa$. For $X\in\Alex^n(\kappa)$ the Toponogov comparison holds in the sense that the geodesic triangles are ``fatter" than the corresponding triangles in the space form $\dS^2_\kappa$.  For example, the Gromov-Hausdorff limits of Riemannian manifolds with $\sec\ge\kappa$ are Alexandrov spaces with curvature $\ge \kappa$. The quotient space $M/G\in\Alex(\kappa)$, where $M$ is a Riemannian manifold with $sec_{M}\ge\kappa$ and group $G$ acts on $M$ isometrically.

To compare with Ricci limit spaces, we would like to point out that not every Alexandrov space is isometric to a non-collapsed limit of Riemannian manifolds with uniform lower sectional curvature bound, due to some topological obstruction (c.f. \cite{Kap07}). It is an open question whether every Alexandrov space is a collapsed limit of Riemannian manifolds with uniform lower sectional curvature bound.

Similar to Theorem \ref{t:m.c.Ric}, we have the following result.

\begin{theorem}\label{t:met.comp.Alex}
  Let $n\ge 2$ and subset $\mathcal U\subset X\in \Alex^n(\kappa)$ be equipped with the intrinsic metric $d_{\mathcal U}$. If $\dim_{\cH}(X\setminus \mathcal U)< n-1$, then the metric completion of $(\mathcal U, d_{\mathcal U})$ is isometric to $(X, d_X)$.
\end{theorem}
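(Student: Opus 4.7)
The plan is to mirror the proof of Theorem \ref{t:m.c.Ric}, with Ricci limit geodesics replaced by genuine Alexandrov geodesics. By Lemma \ref{l:metric_c} it suffices to show that for every $x,y\in X$ and every $\epsilon>0$ there exist $x',y'\in\mathcal U$ with $|xx'|+|yy'|<\epsilon$ and some minimizing geodesic $\geod{x'y'}$ entirely contained in $\mathcal U$. Set $E=X\setminus\mathcal U$, and for $p\in X$ and $R>0$ define
$$H_R(p,E)=\{y\in\bar B_R(p):(\geod{py}\cap E)\setminus\{p\}\neq\varnothing\text{ for some geodesic }\geod{py}\}.$$
The heart of the proof is the claim that $\cH^n(H_R(p,E))=0$ whenever $\dim_{\cH}(E)<n-1$.

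The key geometric ingredient is the Alexandrov analogue of the estimate used in the Ricci case: for $p,x\in X$ with $d(p,x)=r_0\in(0,R)$ and $r<\min(r_0,R-r_0)/100$, one has
$$\cH^n(A_r(p,x))\le c(n,\kappa,r_0)\,r^{n-1},$$
where $A_r(p,x)=\{y\in B_R(p):\geod{py}\cap B_r(x)\neq\varnothing\text{ for some geodesic}\}$. I would derive this directly in $X$ (no approximation needed) by combining two classical tools for $\Alex^n(\kappa)$: Toponogov hinge comparison forces every geodesic contributing to $A_r(p,x)$ to cross the sphere $S_{r_0}(p)$ inside $B_{2r}(x)$, and the Burago--Gromov--Perelman volume comparison in $\Alex^n(\kappa)$ converts this $(n-1)$-dimensional base estimate into the cone-volume bound $O(r^{n-1})$ up to radius $R$.

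Given this estimate, the measure-zero conclusion for $H_R(p,E)$ follows verbatim from the argument already written in the Ricci case. Since $\dim_{\cH}(E)<n-1$ we in particular have $\cH^{n-1}(E)=0$, so for each $\epsilon>0$ we can cover $E\cap(B_{R-\epsilon}(p)\setminus B_\epsilon(p))$ by balls $B_{r_\alpha}(x_\alpha)$ with $\sum_\alpha r_\alpha^{n-1}<\epsilon$; summing the estimates for $A_{r_\alpha}(p,x_\alpha)$ yields $\cH^n(H_R(p,E))\le C(n,\kappa,\epsilon)\,\epsilon\to 0$. To produce the $x',y'$ required by Lemma \ref{l:metric_c}, I would first pick $x'\in B_\epsilon(x)\setminus E$ (which exists because $\dim_{\cH}(E)<n-1$ forces $\cH^n(E)=0$), then apply $\cH^n(H_R(x',E))=0$ with $R=|x'y|+\epsilon$ to choose $y'\in B_\epsilon(y)\setminus(H_R(x',E)\cup E)$. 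Every minimizing geodesic $\geod{x'y'}$ then avoids $E$ and so lies in $\mathcal U$.

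The hard part will be the estimate $\cH^n(A_r(p,x))\le c\,r^{n-1}$ in the singular setting, where geodesics can branch, directions in $\Sigma_p$ need not extend uniquely, and $\exp_p$ is not even a local diffeomorphism. The saving facts are that Toponogov still pins the radial image $\gamma_\xi(r_0)$ of any $\xi$ contributing to $A_r(p,x)$ into $B_{2r}(x)\cap S_{r_0}(p)$, and the Burago--Gromov--Perelman comparison simultaneously controls the $(n-1)$-dimensional size of that intersection on $S_{r_0}(p)$ and the $n$-dimensional volume of its forward cone in $B_R(p)$. Once this estimate is in place, everything else is routine measure-theoretic bookkeeping parallel to the Ricci proof.
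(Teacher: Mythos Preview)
Your argument is correct, but the paper takes a different and somewhat slicker route. Both proofs reduce via Lemma~\ref{l:metric_c} to showing that the ``bad'' set $H_R(p,E)$ of points reachable from $p$ only through $E=X\setminus\mathcal U$ has $\cH^n$-measure zero. You establish this by transplanting the Ricci-limit argument: a Bishop--Gromov cone-volume estimate $\cH^n(A_r(p,x))\le c\,r^{n-1}$ together with a covering of $E$ by small balls whose radii satisfy $\sum r_\alpha^{n-1}<\epsilon$. The paper instead proves a \emph{dimension comparison} (Lemma~\ref{map.dim}): it defines a map $h\colon H_R(p,E)\to E\times[0,\infty)$ sending $x$ to $(\bar x,|px|)$ with $\bar x\in\geod{px}\cap E$, and uses Toponogov directly to show that $h$ is co-Lipschitz, $|h(x_1)h(x_2)|\ge c\,|x_1x_2|$. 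This immediately yields $\dim_\cH(H_R(p,E))\le\dim_\cH(E)+1<n$, hence $\cH^n(H_R(p,E))=0$.

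The trade-offs: the paper's co-Lipschitz map is a one-line application of triangle comparison, avoids all the measure-theoretic bookkeeping, and yields the sharper conclusion that $H_R(p,E)$ has Hausdorff \emph{dimension} below $n$ --- but it genuinely needs the hypothesis $\dim_\cH(E)<n-1$. Your volume-comparison approach is heavier (you correctly flag the cone estimate as the nontrivial step, though it does follow from BGP's Bishop--Gromov in $\Alex^n(\kappa)$), but it only uses $\cH^{n-1}(E)=0$; carried through, it would actually prove the theorem under this weaker hypothesis, matching the Ricci statement in Theorem~\ref{t:m.c.Ric} rather than the stronger hypothesis the paper records for Alexandrov spaces.
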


This follows from Lemma \ref{l:metric_c} and the following lemma for which we will outline the proof.

\begin{lemma}[Dimension comparison]\label{map.dim}
  Let $\Omega_0\subseteq X\in\Alexnk$ be a subset and $p\in X$ be a fixed point. Let $\Omega\subseteq X$ be a subset such that $d(p,\Omega)>0$. If for each $x\in \Omega_0$ there exists a geodesic $\geod{px}$ such that $\Omega\cap\geod{px}\neq\varnothing$,
  then
  $$\dim_\cH(\Omega)\ge\dim_\cH(\Omega_0)-1.$$
\end{lemma}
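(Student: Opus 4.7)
The plan is to show that $\Omega_0$ injects into the product $\Omega\times[1,T]$ by a map whose inverse is Lipschitz, with constant depending only on $\kappa$ and a few scale parameters. Since Lipschitz maps do not increase Hausdorff dimension and $\dim_{\cH}(\Omega\times[1,T])\le\dim_{\cH}(\Omega)+1$, this yields $\dim_{\cH}(\Omega_0)\le\dim_{\cH}(\Omega)+1$, which is the claim after rearrangement. After partitioning $\Omega$ and $\Omega_0$ into countably many pieces (which leaves Hausdorff dimension unchanged), I may assume there exist $0<r_0\le R$ and $R'<\infty$ with $d(p,q)\in[r_0,R]$ for all $q\in\Omega$ and $d(p,x)\le R'$ for all $x\in\Omega_0$; set $T=R'/r_0$. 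For each $x\in\Omega_0$ the hypothesis supplies a geodesic $\geod{px}$ and a point $q_x\in\Omega\cap\geod{px}$; let $t_x=d(p,x)/d(p,q_x)\in[1,T]$. This defines a map $\Psi\colon\Omega_0\to\Omega\times[1,T]$ by $\Psi(x)=(q_x,t_x)$.

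The technical heart of the argument is the inequality
\[
d_X(x_1,x_2)\le C\bigl(d_X(q_{x_1},q_{x_2})+|t_{x_1}-t_{x_2}|\bigr),\qquad C=C(\kappa,r_0,R,T),
\]
which expresses that extending a geodesic from $p$ by a bounded factor is locally Lipschitz on $\Omega$. I would deduce it from Toponogov comparison as follows. Write $s_i=d(p,x_i)=t_i\,d(p,q_i)$; since $q_i$ lies on $\geod{px_i}$, the angle at $p$ between $\geod{px_1}$ and $\geod{px_2}$ coincides with that between $\geod{pq_1}$ and $\geod{pq_2}$, and the monotonicity of comparison angles in $\Alexnk$ yields
\[
\tilde\measuredangle_\kappa(x_1,p,x_2)\le\tilde\measuredangle_\kappa(q_1,p,q_2).
\]
Substituting this angle bound into the law of cosines for $\triangle x_1 p x_2$ in $M^2_\kappa$ gives $d_X(x_1,x_2)\le d_{M^2_\kappa}(\tilde x_1,\tilde x_2)$, where $\tilde x_i$ lie on rays from $\tilde p$ of length $s_i$ meeting at angle $\tilde\measuredangle_\kappa(q_1,p,q_2)$. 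In the model case $\kappa=0$ a direct computation gives the clean bound $d_X(x_1,x_2)\le T\,d_X(q_1,q_2)+R|t_1-t_2|$; the general-$\kappa$ case differs only by bounded $\kappa$-dependent factors under the standing scale assumption.

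The estimate immediately shows $\Psi$ is injective---if $\Psi(x_1)=\Psi(x_2)$, then $d_X(x_1,x_2)=0$---and that $\Psi^{-1}\colon\Psi(\Omega_0)\to\Omega_0$ is $C$-Lipschitz. Therefore
\[
\dim_{\cH}(\Omega_0)\le\dim_{\cH}\bigl(\Psi(\Omega_0)\bigr)\le\dim_{\cH}(\Omega\times[1,T])\le\dim_{\cH}(\Omega)+1,
\]
the last inequality coming from an elementary covering argument that exploits the boundedness of $[1,T]$. The main obstacle is the Lipschitz estimate itself: one must identify the correct monotonicity statement in Alexandrov geometry and verify that the resulting model-space computation delivers a Lipschitz constant depending only on the scale parameters $\kappa,r_0,R,T$. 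A minor subtlety is that $q_x$ is selected non-canonically, but the injectivity of $\Psi$ falls out of the estimate itself, so no further selection step is required.
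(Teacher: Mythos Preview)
Your argument is correct and follows essentially the same route as the paper: both construct a map $\Omega_0\to\Omega\times\dR$ by recording the chosen intersection point together with a radial parameter, and both invoke Toponogov's monotonicity of comparison angles to show this map is co-Lipschitz, yielding $\dim_\cH(\Omega_0)\le\dim_\cH(\Omega)+1$. The only cosmetic differences are your use of the ratio $t_x=|px|/|pq_x|$ in place of the paper's $|px|$ as second coordinate, and your explicit reduction to bounded scale parameters via countable partition---both harmless variants of the same idea.
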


\begin{proof}
  Let $\Gamma=\Omega\times[0,\infty)$ be equipped with the metric
  $$d((x_1,t_1),(x_2,t_2))=|x_1x_2|_X+|t_1-t_2|,$$
  where $x_i\in\Omega$ and $t_i\in[0,\infty)$, $i=1,2$.
  Define map $h: \Omega_0\to\Gamma$, $x\mapsto (\bar x,|px|_X)$, where $\geod{px}$ is selected such that $\geod{px}\cap\Omega\neq\varnothing$ and $\bar x\in \geod{px}\cap\Omega$ is selected arbitrarily. By Toponogov's Comparison Theorem, one can show that there is a constant $c>0$ so that for any $x_1,x_2\in \Omega_0$, we have
  \begin{align}
    |h(x_1)h(x_2)|_{\Gamma}\ge c\cdot|x_1x_2|_X.
    \label{map.dim.e1}
  \end{align}
  Therefore, we have
  $$\dim_\cH(\Omega)+1\ge\dim_\cH(\Gamma)\ge \dim_\cH(\Omega_0).$$
\end{proof}

Let us begin with some special cases of LV-Rigidity. Let $X\in\Alex^n(1)$ and $p\in X$. Because Topogonov comparison holds on $X$ with respect to the space form $\mathds S^2_1$, we naturally have a distance non-decreasing map $h\colon X\to\mathds S^n_1$. If $\cH^n(X)=\cH^n(\mathds S^n_1)$, one can find a 1-Lipschitz onto map $f\colon \mathds S^n_1 \to X$ for which $\cH^n(\mathds S^n_1)=\cH^n(f(\mathds S^n_1))=\cH^n(X)$. This is in turn a type of LV-Rigidity problem.

\begin{theorem}[\cite{BGP}]
  If $X\in\Alex^n(1)$ and $\cH^n(X)=\cH^n(\mathds S^n_1)$, then $X$ is isometric to $\mathds S^n_1$.
\end{theorem}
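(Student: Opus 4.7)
The plan follows the strategy outlined in the discussion immediately preceding the theorem: produce a distance non-decreasing comparison map $h\colon X \to \mathds{S}^n_1$ from Toponogov's theorem, invert it to obtain a $1$-Lipschitz volume-preserving surjection, and then invoke LV-Rigidity.

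First, I would fix $p \in X$ and $\bar p \in \mathds{S}^n_1$, and, by induction on dimension, obtain a distance non-decreasing map $\phi\colon \Sigma_p \to \mathds{S}^{n-1}_1$ on the space of directions — this is available because $\Sigma_p \in \Alex^{n-1}(1)$, and the base case $n=1$ reduces to a direct stretching argument on a circle of circumference $\le 2\pi$ or an arc of length $\le\pi$. Then define
\[
  h(x)\;:=\;\Exp_{\bar p}\bigl(\,|px|_X\cdot\phi(v_x)\,\bigr),
\]
where $v_x \in \Sigma_p$ is a unit direction of a chosen geodesic $\geod{px}$. Combining the hinge form of Toponogov,
\[
  \cos|xy|_X \;\ge\; \cos|px|\cos|py|+\sin|px|\sin|py|\cos\angle(v_x,v_y),
\]
with the spherical law of cosines computing $|h(x)h(y)|_{\mathds{S}^n_1}$ and the inequality $d_{\mathds{S}^{n-1}}(\phi(v_x),\phi(v_y))\ge\angle(v_x,v_y)$ (from $\phi$ being distance non-decreasing), one deduces $|h(x)h(y)|_{\mathds{S}^n_1}\ge|xy|_X$. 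In particular, $h$ is injective.

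Next, because $h$ is distance non-decreasing it does not decrease $\cH^n$, so the compact image $h(X)\subseteq \mathds{S}^n_1$ satisfies $\cH^n(h(X))\ge \cH^n(X)=\cH^n(\mathds{S}^n_1)$, forcing $h(X)=\mathds{S}^n_1$. Hence $f:=h^{-1}\colon\mathds{S}^n_1\to X$ is a $1$-Lipschitz bijection with $\cH^n(\mathds{S}^n_1)=\cH^n(f(\mathds{S}^n_1))=\cH^n(X)$. Applying the LV-Rigidity principle (the Alexandrov analogue of Theorem \ref{t:LW-rig}) to $f$ shows that $f$ is a path isometry; bijectivity between two length spaces upgrades this to a global isometry.

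The main obstacle I anticipate is the inductive construction of $\phi$, together with verifying that the resulting $h$ is well-defined independently of the choice of geodesic $\geod{px}$ and remains distance non-decreasing in the presence of a nontrivial cut locus on $\Sigma_p$. A secondary technical issue is that Theorem \ref{t:LW-rig} is stated for Ricci limit spaces rather than Alexandrov spaces; one either establishes the Alexandrov version in parallel, or, in this particular case where the source of $f$ is the smooth sphere, one can replace the last step by the co-area argument used in Theorem \ref{t:LV-rig-mfd}.
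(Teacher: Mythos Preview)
Your proposal faithfully expands the sketch the paper gives in the paragraph immediately preceding the theorem: build the distance non-decreasing comparison map $h$, pass to $f=h^{-1}$, and recognize the situation as an instance of LV-Rigidity. The paper does not supply a proof beyond that paragraph and simply cites \cite{BGP}, so what you wrote is essentially what the reader is meant to reconstruct.

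Two remarks on the obstacles you flag. First, well-definedness of $h$ is a non-issue: one makes an arbitrary choice of $\geod{px}$ (hence of $v_x$) for each $x$, and the hinge inequality you wrote holds for \emph{any} such choices, so $h$ is distance non-decreasing regardless. Continuity of $h$ is not needed either: the $1$-Lipschitz inverse $h^{-1}$ extends from the (automatically dense) set $h(X)$ to a $1$-Lipschitz surjection $\bar f\colon\mathds S^n_1\to X$, and it is $\bar f$ to which one applies LV-Rigidity.

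Second, your fallback of reusing the tube-volume argument of Theorem~\ref{t:LV-rig-mfd} does \emph{not} go through: that proof requires the upper bound $\cH^n(B_r(f(\gamma)))\le c(n)\,r^{n-1}\mathcal L(f(\gamma))+O(r^n)$ on the \emph{target}, with the same leading constant $c(n)$ as on the source, and for a general Alexandrov target only a cruder bound (via a ball covering and Bishop--Gromov) is available, with a worse constant that spoils the comparison. Within the framework of this survey the correct tool is Theorem~\ref{t:Li15-1-1-lip}: since $\mathds S^n_1\in\Alex^n(1)$ and $\partial\mathds S^n_1=\varnothing$, that theorem (together with Corollary~\ref{shrinking.cor}(1)) yields directly that $\bar f$ is an isometry. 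Historically the original argument in \cite{BGP} predates these LV-Rigidity statements and is more self-contained, but that is outside the scope of the present paper.
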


This was generalized to the so-called $\kappa$-tangent cone rigidity by N. Li and X. Rong. Let $C^R_\kappa(\Sigma)$ be the metric cone over $\Sigma_p$, on which the Euclidean cosine law is replaced by the cosine law on the space form $S^2_\kappa$. In particular, if $\kappa>0$, then $C^R_\kappa(\Sigma)$ is the $\kappa$-suspension of $\Sigma$. Given $\Sigma\in\Alex^{n-1}(1)$, $n\in\dN$, $\kappa\in\dR$ and $R>0$, let $\mathcal A^R_{n,\,\kappa}(\Sigma)$
denote the collection of Alexandrov spaces $(X,p)\in\Alex^n(\kappa)$ so that $X\subseteq\bar B_R(p)$ and there exists a 1-Lipschitz map $\varphi\colon \Sigma\to\Sigma_p$. For any $(X,p)\in \mathcal A^R_{n,\,\kappa}(\Sigma)$, we have
$$\cH^n(X)\le\cH^n(B_R(p))\le \cH^n(C^R_\kappa(\Sigma))=v(\Sigma,n,\kappa,R).$$

Now fix $\Sigma\in\Alex^{n-1}(1)$, $n\in\dN$, $\kappa\in\dR$ and $R>0$. Given any isometric involution $\phi\colon\Sigma\to\Sigma$, the quotient space $\bar C^R_\kappa(\Sigma)
/((x,R)\sim (\phi(x),R))\in \mathcal A^R_{n,\,\kappa}(\Sigma)$, whose $n$-dimensional Hausdorff measure is equal to $v(\Sigma,n,\kappa,R)$. This gives us some model spaces in $\mathcal A^R_{n,\,\kappa}(\Sigma)$ which have the maximum volume.

\begin{theorem}[{Relatively maximum volume rigidity} \cite{LR12}]\label{t:rmv-rig}
Let $X\in\mathcal A^R_{n,\,\kappa}(\Sigma)$ such that $\cH^n(X)=\cH^n(C^R_\kappa(\Sigma))$.
Then $X$ is isometric to $\bar C^R_\kappa(\Sigma)/((x,R)\sim (\phi(x),R))$, where $\phi: \Sigma\to\Sigma$ is an isometric involution. Moreover, if $\kappa>0$, then either $R\le \frac \pi{2\sqrt \kappa}$ or $R=\frac \pi{\sqrt \kappa}$.
\end{theorem}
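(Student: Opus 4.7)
The plan is to build a 1-Lipschitz surjection from the model cone onto $X$ and then squeeze the volume hypothesis to force it to be a quotient by an isometric involution on the outer sphere.

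First I would construct the map. The 1-Lipschitz $\varphi\colon\Sigma\to\Sigma_p$ extends radially to $\tilde\varphi\colon\bar C^R_\kappa(\Sigma)\to\bar C^R_\kappa(\Sigma_p)$, $(x,t)\mapsto(\varphi(x),t)$, which is 1-Lipschitz because the $\kappa$-cosine law distance on the cone is monotone nondecreasing in the angular variable. Toponogov comparison on $X\in\Alex^n(\kappa)$ supplies a 1-Lipschitz surjection $\Exp_p\colon\bar C^R_\kappa(\Sigma_p)\to\bar B_R(p)$, since every point of $\bar B_R(p)$ lies on a minimizing geodesic from $p$. The composition $f=\Exp_p\circ\tilde\varphi\colon\bar C^R_\kappa(\Sigma)\to\bar B_R(p)\supseteq X$ is a 1-Lipschitz map landing in $\bar B_R(p)$.

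Second, the hypothesis $\cH^n(X)=\cH^n(\bar C^R_\kappa(\Sigma))$ together with the chain $\cH^n(X)\le\cH^n(\bar B_R(p))\le v(\Sigma,n,\kappa,R)$ from Bishop--Gromov forces every inequality to become an equality. The rigidity case of Bishop--Gromov then gives $X=\bar B_R(p)$, while any loss in $\varphi$ would propagate to a strict loss of cone volume and hence of $\cH^n(\bar B_R(p))$; therefore $\varphi$ must be onto and an isometry of $\Sigma$ onto $\Sigma_p$. Consequently $f$ is a measure-preserving 1-Lipschitz map from the cone onto $X$. Over the open cone $\{r<R\}$, Bishop--Gromov saturation along every radial geodesic combined with the isometry of $\varphi$ makes $\Exp_p$ a local isometry at each regular point of $X$, so $f|_{\{r<R\}}$ is an isometric embedding into $X$.

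The main work is the third step: identifying the quotient on the outer sphere $\{r=R\}$. By the previous step any nontrivial identification occurs on this sphere. If $(x_1,R)$ and $(x_2,R)$ share an image $q\in X$, then the two radial segments joining $p$ to $q$ form a closed curve of length $2R$; taking $\kappa$-comparison triangles at both endpoints $p$ and $q$ for pairs close to $x_1,x_2$ in $\Sigma$, and using that $f$ is an isometric embedding on the interior, one shows that $x_1\mapsto x_2$ extends to a globally defined map $\phi\colon\Sigma\to\Sigma$ preserving the $\Sigma$-metric and satisfying $\phi^2=\mathrm{id}$. This gives the desired isometry $X\cong\bar C^R_\kappa(\Sigma)/((x,R)\sim(\phi(x),R))$. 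The hard part is precisely the promotion of a single fiber-wise identification into a well-defined global isometry $\phi$ — as opposed to a coarser or incoherent equivalence relation on the outer sphere — which requires simultaneous use of Toponogov comparison at both $p$ and $q$ together with volume saturation.

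Finally, for the $\kappa>0$ dichotomy, one inspects when a quotient $\bar C^R_\kappa(\Sigma)/((x,R)\sim(\phi(x),R))$ can carry curvature $\ge\kappa$. If $R\le\pi/(2\sqrt\kappa)$, the outer sphere lies within the equator of the $\kappa$-sphere and the quotient is compatible with Toponogov. For $R\in(\pi/(2\sqrt\kappa),\pi/\sqrt\kappa)$, a $\kappa$-hinge at $p$ with two radial edges of length $R$ and small opening angle produces a comparison distance strictly larger than the identified distance $0$, obstructing the curvature bound unless $\phi=\mathrm{id}$, in which case no identification occurs and $\bar B_R(p)$ has boundary, contradicting $X\in\Alex^n(\kappa)$ being closed in $\bar B_R(p)$. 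Thus this range is excluded and the remaining possibility is $R=\pi/\sqrt\kappa$, the full $\kappa$-suspension, where $\{r=R\}$ collapses to a single point.
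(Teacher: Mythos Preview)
Your overall architecture matches the paper's outline exactly: construct a 1-Lipschitz onto map $f\colon \bar C^R_\kappa(\Sigma)\to X$, then prove an LV-Rigidity statement for this particular $f$, then extract the involution. Two places, however, contain genuine problems.

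\textbf{Step 1.} In an Alexandrov space there is no globally defined exponential map: not every direction in $\Sigma_p$ is the initial direction of a geodesic of length $R$, so your $\Exp_p$ is not a priori defined on all of $\bar C^R_\kappa(\Sigma_p)$. The map one actually uses in \cite{LR12} is the gradient exponential $\text{gexp}_p$, which is defined on the entire $\kappa$-cone and is 1-Lipschitz; alternatively one first shows, from the volume equality, that every radial direction does extend to length $R$. Either way this must be argued, not assumed.

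\textbf{Step 4.} Your hinge argument is oriented the wrong way. Toponogov's hinge comparison in curvature $\ge\kappa$ says the endpoint distance is \emph{at most} the model distance, i.e.\ $|xy|\le|\tilde x\tilde y|$. So the fact that two radial edges of length $R$ and opening angle $\theta>0$ end at the same point ($d=0$) is perfectly compatible with $0\le\tilde d$; it gives no obstruction whatsoever, for any $R$. Indeed, the paper explicitly notes that $\bar C^R_\kappa(\Sigma)/((x,R)\sim(\phi(x),R))$ lies in $\mathcal A^R_{n,\kappa}(\Sigma)$ for \emph{every} isometric involution $\phi$, so nontrivial $\phi$ cannot be excluded by comparison at $p$. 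Your fallback (``$\phi=\mathrm{id}$ forces boundary, contradiction'') is also wrong: Alexandrov spaces are allowed to have boundary. The correct obstruction for $R\in(\pi/(2\sqrt\kappa),\pi/\sqrt\kappa)$ is local, at the glued points: the level set $\{r=R\}$ sits past the equator of the $\kappa$-suspension, so the warping factor $\sin(\sqrt\kappa\,r)$ is strictly decreasing there, and one checks that the resulting space of directions at a glued boundary point fails to lie in $\Alex^{n-1}(1)$ (equivalently, comparison fails for small triangles near that point). This is the ``extra work'' the paper alludes to and is carried out in \cite{LR12}; it is not a hinge-at-$p$ argument.
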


As an outline of the proof, the first step is to construct a 1-Lipschitz onto map $f\colon \bar C^R_\kappa(\Sigma)\to X$. Then an LV-Rigity theorem was proved for this special case. The map $f$ may not be a local isometry since $\phi$ can be a non-trivial involution. If the fixed point set of $\phi$ is of dimension smaller than $n-2=\dim(\Sigma)-1$, then $X$ may not be a topological manifold. The topological types of $X$ can be classified if it is known to be a topological manifold.

\begin{theorem}[\cite{Li15-1},\cite{LR12}]
Let the assumptions be the same as in Theorem \ref{t:rmv-rig}. If $X$ is a topological manifold without boundary, then $X$ is either homeomorphic to $\Bbb S^n$ or homotopy equivalent to $\Bbb{RP}^n$. In particular, $\dim(\text{Fix}_\phi)=n-2=\dim(\Sigma)-1$.
\end{theorem}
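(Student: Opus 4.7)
The plan is to extract the topology of $X$ from its explicit quotient representation provided by Theorem \ref{t:rmv-rig}, and then exploit the manifold-without-boundary hypothesis as a strong constraint on both $\Sigma$ and the involution $\phi$, sufficient to determine $X$ up to homotopy.

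The first step is to show that $\Sigma$ is homeomorphic to $\mathbb{S}^{n-1}$. The apex $o$ of the cone $\bar{C}^R_\kappa(\Sigma)$ has $\Sigma$ as its space of directions, and the outer identification $(x,R)\sim(\phi(x),R)$ does not affect sufficiently small spherical links around $o$. Hence the link of $[o]$ in $X$ is homeomorphic to $\Sigma$. Since $X$ is a topological manifold without boundary at $[o]$, Perelman's manifold recognition results in Alexandrov geometry force $\Sigma\cong\mathbb{S}^{n-1}$. With $\Sigma\cong\mathbb{S}^{n-1}$, the cone $\bar{C}^R_\kappa(\Sigma)$ is homeomorphic either to $D^n$ (when its outer boundary is a genuine copy of $\Sigma$) or to $\mathbb{S}^n$ (when the outer boundary collapses to a point).

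Next I would handle the dichotomy provided by Theorem \ref{t:rmv-rig}. If $\kappa>0$ and $R=\pi/\sqrt{\kappa}$, the outer sphere $\Sigma\times\{R\}$ already degenerates to a single opposite pole in $\bar{C}^R_\kappa(\Sigma)$, so the identification by $\phi$ is vacuous and $X$ is the unreduced topological suspension of $\Sigma\cong\mathbb{S}^{n-1}$, giving $X\cong\mathbb{S}^n$. Otherwise $R\le\pi/(2\sqrt{\kappa})$ (or $\kappa\le 0$), and $\bar{C}^R_\kappa(\Sigma)\cong D^n$ with $\phi$ acting non-trivially on $\partial D^n\cong\mathbb{S}^{n-1}$. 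Here I would perform a local analysis at points $[(x,R)]\in X$: when $\phi(x)\ne x$, small half-ball neighborhoods of $(x,R)$ and $(\phi(x),R)$ are glued along their flat faces to produce a Euclidean chart; when $\phi(x)=x$, no gluing occurs, so $[(x,R)]$ remains a manifold-with-boundary point. The hypothesis that $X$ has no boundary therefore forces $\phi$ to be free on $\Sigma$, so $\text{Fix}_\phi=\emptyset$ and in particular $\dim(\text{Fix}_\phi)\le n-2=\dim(\Sigma)-1$, which is the claimed dimension bound. Realizing $X$ as the $\mathbb{Z}/2$-quotient of the double $D^n\cup_{\partial}D^n\cong\mathbb{S}^n$ under the involution that swaps the two half-disks via $\phi$ on the common boundary, $X$ is an $n$-dimensional spherical space form with fundamental group $\mathbb{Z}/2$, which is homotopy equivalent to $\mathbb{RP}^n$ by the classification of such space forms.

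The main obstacle is the first step, where one must upgrade the Alexandrov-geometric input to the topological-sphere statement $\Sigma\cong\mathbb{S}^{n-1}$. This relies on Perelman's stability and conical neighborhood theorems, which are technically demanding. A secondary difficulty is verifying that non-free isometric involutions on $\mathbb{S}^{n-1}$ genuinely leave a manifold-with-boundary fold at fixed points: one must exclude accidental cancellations from the surrounding identifications, but this is forced by the explicit local structure described in the preceding paragraph.
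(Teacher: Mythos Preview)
The paper itself does not prove this theorem (it is quoted from \cite{Li15-1} and \cite{LR12}), but your argument has a real error. The claim ``when $\phi(x)=x$, no gluing occurs, so $[(x,R)]$ remains a manifold-with-boundary point'' is false: although $(x,R)$ is identified only with itself, every nearby boundary point $(y,R)$ with $\phi(y)\neq y$ is glued to the distinct point $(\phi(y),R)$, and this changes the link at $[(x,R)]$. The space of directions at $[(x,R)]$ is the hemisphere over $\Sigma_x(\Sigma)$ with its equator folded by the differential $d\phi_x$. When $\text{Fix}_\phi$ has codimension~$1$ in $\Sigma$ at $x$, the map $d\phi_x$ is a single reflection and this fold yields a topological $(n{-}1)$-sphere, so $[(x,R)]$ is an \emph{interior} manifold point, not a boundary point. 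Concretely, take $n=2$, $\Sigma=\mathbb{S}^1$, and $\phi$ the reflection with two fixed points: folding the boundary circle of a flat disk in half gives a topological $\mathbb{S}^2$ with two cone points of angle $\pi$, a closed manifold with $\dim(\text{Fix}_\phi)=0=n-2$.

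Consequently the manifold hypothesis does not force $\phi$ to be free; it forces $\text{Fix}_\phi$ to be either empty (giving the $\mathbb{RP}^n$ case) or of codimension exactly~$1$ in $\Sigma$ (giving the $\mathbb{S}^n$ case, since then $\Sigma/\phi$ is acyclic and $X\simeq D^n/\partial D^n$). Fixed sets of codimension $\ge 2$ produce links containing an $\mathbb{RP}^m$-factor with $m\ge 2$, which are not spheres. This is precisely the content of the ``In particular'' clause: it asserts the \emph{equality} $\dim(\text{Fix}_\phi)=n-2$ when the fixed set is nonempty, not the vacuous upper bound you extract from $\text{Fix}_\phi=\varnothing$. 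A minor additional issue: your description of $X$ in the free case as a $\mathbb{Z}/2$-quotient of the double $D^n\cup_\partial D^n$ is not well-posed as written, since swapping the two disks by the identity restricts to the identity on the common boundary rather than to $\phi$; the conclusion there is nevertheless correct via the cell structure $X=(\Sigma/\phi)\cup_q e^n$ with $q$ the double cover.
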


This is also the counterpart of K. Grove and P. Petersen's results in Riemannian Geometry in \cite{GP92}.

Note that in Theorem \ref{t:rmv-rig}, $f$ is a local isometry restricted to the interior of $\bar C^R_\kappa(\Sigma)$ and there is possibly a gluing structure over the boundary, induced by an isometric involution $\phi$. It was asked whether this kind of boundary gluing structure still holds for general Alexandrov spaces, provided a 1-Lipschitz, volume preserving onto map $f\colon X\to Y$. This was confirmed in \cite{Li15-1}.

By $(\amalg_\ell X_\ell, d)$ we denote the disjoint union of length metric spaces $\{(X_\ell,d_\ell)\}$, where $d(p,q)=d_\ell(p,q)$ if $p,q\in X_\ell$ for some $\ell$ and $d(p,q)=\infty$ otherwise. Let $ X^\circ=\amalg_\ell X_\ell^\circ$ denote the interior of $X$ and $\partial X=\amalg_\ell\partial X_\ell$ be the boundary.

\begin{theorem}[\cite{Li15-1}]\label{t:Li15-1-1-lip}
  Let $Y\in\Alex^n(\kappa)$ and $X=\dsju{\ell=1}{N_0} X_\ell$, where $X_\ell\in\Alex^n(\kappa)$ for all $1\le \ell\le N_0$. If a 1-Lipschitz map $\dsp f\colon X\to Y$ satisfies $\cH^n(X)=\cH^n(f(X))$, then $f$ is a path isometry and $f\,|_{\, X^\circ}$ is an isometry with respect to the intrinsic metrics.
\end{theorem}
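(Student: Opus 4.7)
The plan is to follow the co-area argument of Theorem \ref{t:LV-rig-mfd}, with the smooth tube-volume formula replaced by its Alexandrov counterpart. First I would upgrade the global volume equality to a set-level statement: for every measurable $E\subseteq X$, $\cH^n(f(E))=\cH^n(E)$. This follows because subadditivity gives $\cH^n(f(X))\le \cH^n(f(E))+\cH^n(f(X\setminus E))$, while the 1-Lipschitz hypothesis gives the pointwise bounds $\cH^n(f(E))\le\cH^n(E)$ and $\cH^n(f(X\setminus E))\le\cH^n(X\setminus E)$; the assumption $\cH^n(f(X))=\cH^n(X)$ then forces equality throughout.

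Second, for a minimizing geodesic $\gamma$ whose interior lies in some $X_\ell^\circ$, I would establish the sharp tube-volume asymptotic
\[
  \cH^n(B_r(\gamma)) = c(n)\, r^{n-1}\, \mathcal{L}(\gamma) + o(r^{n-1}) \qquad (r\to 0),
\]
together with the corresponding upper bound $\cH^n(B_r(f(\gamma)))\le c(n)\, r^{n-1}\mathcal{L}(f(\gamma)) + o(r^{n-1})$ on $Y$, where $c(n)$ is the Euclidean constant from Theorem \ref{t:LV-rig-mfd}. The upper bounds follow by covering $\gamma$ with small balls and invoking Bishop--Gromov in $X_\ell$ and $Y$. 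The matching lower bound for $X$ uses that the $n$-regular set has full $\cH^n$-measure in $X_\ell^\circ$, so $\cH^1$-almost every point of $\gamma$ has tangent cone $\mathbb{R}^n$; a coarea-type analysis with the semiconcave distance $\dist(\cdot,\gamma)$ then produces the Euclidean constant $c(n)$. Combining with Step~1 and $f(B_r(\gamma))\subseteq B_r(f(\gamma))$,
\[
  c(n)r^{n-1}\mathcal{L}(\gamma) + o(r^{n-1})=\cH^n(B_r(\gamma))=\cH^n(f(B_r(\gamma)))\le \cH^n(B_r(f(\gamma)))\le c(n)r^{n-1}\mathcal{L}(f(\gamma)) + o(r^{n-1}).
\]
Dividing by $r^{n-1}$ and letting $r\to 0$ gives $\mathcal{L}(\gamma)\le\mathcal{L}(f(\gamma))$, and 1-Lipschitz gives the reverse inequality. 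Approximating arbitrary curves by piecewise-minimizing geodesics, and handling curves that limit into $\partial X_\ell$ via Theorem \ref{t:met.comp.Alex}, extends length preservation to all paths, so $f$ is a path isometry on $X$.

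Third, to deduce that $f|_{X^\circ}$ is an intrinsic isometry, I would prove local injectivity at every regular point $p\in X_\ell^\circ$. Step~1 gives $\cH^n(f(B_r(p)))=\cH^n(B_r(p))=\omega_n r^n + o(r^n)$; combined with $f(B_r(p))\subseteq B_r(f(p))$ and the Bishop--Gromov bound $\cH^n(B_r(f(p)))\le\omega_n r^n + o(r^n)$, this forces $B_r(f(p))$ to have maximal Euclidean volume density, so $f(p)$ is a regular point of $Y$ and the induced tangent-cone map $df_p\colon T_pX=\mathbb{R}^n\to T_{f(p)}Y=\mathbb{R}^n$ is a 1-Lipschitz, volume-preserving self-map of $\mathbb{R}^n$, hence an isometry. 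This yields local isometry at every regular point; density of regular points in $X^\circ$ together with Theorem \ref{t:met.comp.Alex} then globalizes to an intrinsic isometry on all of $X^\circ$.

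The main obstacle is the sharp lower bound with the Euclidean constant $c(n)$ in the tube asymptotic: unlike in Riemannian geometry, $\gamma$ may cross singular points of $X_\ell^\circ$, and perpendicular cross-sections exist only in the weak sense of the space of directions, so the estimate requires a careful coarea argument leveraging the full-measure regular set. A secondary subtlety is ruling out that $f$ folds an interior point of one component onto the boundary of another in a length-changing way; the disjoint-union hypothesis on $X$ accommodates precisely the boundary identifications that genuinely occur, as already observed in the relatively maximum volume rigidity of Theorem \ref{t:rmv-rig}.
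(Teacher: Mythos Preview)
The paper is a survey and does not give a proof of this theorem; it is only cited from \cite{Li15-1}.  So there is no in-paper argument to compare your proposal against, and I can only assess whether your outline stands on its own.

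Your Step~1 is correct and is indeed the standard opening move.  The difficulty is in Step~2.  You assert that the \emph{sharp} tube upper bound
\[
  \cH^n\bigl(B_r(f(\gamma))\bigr)\le c(n)\,r^{n-1}\,\mathcal L(f(\gamma))+o(r^{n-1})
\]
with the Euclidean constant $c(n)=\omega_{n-1}$ ``follows by covering $\gamma$ with small balls and invoking Bishop--Gromov''.  It does not.  Covering $f(\gamma)$ by $\approx \mathcal L(f(\gamma))/(2r)$ balls of radius $2r$ and using Bishop--Gromov only yields the constant $2^{n-1}\omega_n$, which is strictly larger than $\omega_{n-1}$.  Since your comparison
\[
  c(n)r^{n-1}\mathcal L(\gamma)+o(r^{n-1})\le \cH^n(B_r(f(\gamma)))\le c(n)r^{n-1}\mathcal L(f(\gamma))+o(r^{n-1})
\]
needs the \emph{same} constant on both sides to conclude $\mathcal L(\gamma)\le\mathcal L(f(\gamma))$, a non-sharp upper bound gives nothing.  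In the smooth Theorem~\ref{t:LV-rig-mfd} the sharp bound is Weyl's tube formula for an arbitrary rectifiable curve in a Riemannian manifold; in an Alexandrov space $Y$ no such formula is available off the shelf, and $f(\gamma)$ is in general a singular curve about which you know only that it is $1$-Lipschitz.  This is the essential missing ingredient, not the lower bound you flag.

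Step~3 also has a gap.  From the volume equality you can legitimately conclude that $f(p)$ is regular whenever $p$ is, and that any subsequential blow-up $df_p\colon\dR^n\to\dR^n$ is a $1$-Lipschitz, volume-preserving surjection, hence an isometry.  But ``$df_p$ is an isometry for every regular $p$'' does not yield local injectivity of $f$ in an Alexandrov space: there is no inverse function theorem available, and singular points are dense in general, so you cannot pass from infinitesimal to local without further argument.  What one actually needs (and what the argument in \cite{Li15-1} supplies) is a direct volume obstruction to $f(p_1)=f(p_2)$ for $p_1,p_2\in X^\circ$, based on the fact that spaces of directions at interior points have no boundary; this is where the induction on dimension and the structure theory of $\Alexnk$ enter, and it is not captured by your tangent-cone sketch.
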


Let $\mathcal R$ be an equivalence relation on $(X,d)$. The quotient pseudometric $d_{\mathcal{R}}$ on $X$ is defined as
$$d_{\mathcal{R}}(p,q)
  =\inf\left\{\sum_{i=1}^Nd(p_i,q_i):p_1=p, q_N=q, p_{i+1}\overset {\mathcal{R}}\sim q_i,
  N\in\dN
\right\}.$$
See $\S3$ in \cite{BBI} for more details. By identifying the points with zero $d_{\mathcal{R}}$-distance, one obtains a length metric space $(X/d_{\mathcal{R}}, \bar d_{\mathcal{R}})$, which is called the space glued from $\{X_\ell\}$ along the equivalence relation $d_{\mathcal{R}}$. The projection map $f\colon X\to X/d_{\mathcal{R}}$ is always a 1-Lipschitz onto.

In Theorem \ref{t:Li15-1-1-lip}, we can define an equivalence relation $\mathcal R$ on $(X,d)$ by $x_1\sim x_2$ $\Leftrightarrow$ $f(x_1)=f(x_2)$. It's clear that if $f$ is onto, then $Y$ is isometric to the glued space $(X/d_{\mathcal{R}}, \bar d_{\mathcal{R}})$. If $f$ is a path isometry, then any two glued paths have the same length. In fact, suppose that $\gamma_1, \gamma_2\subset X$ are glued. That is, $f(\gamma_1(t))=f(\gamma_2(t))$ for all $t$. Then $$\mathcal L_{d_X}(\gamma_1)=\mathcal L_{d_Y}(f(\gamma_1))=\mathcal L_{d_Y}(f(\gamma_2))=\mathcal L_{d_X}(\gamma_2).$$
Such a gluing structure is called gluing by isometry.

To guarantee $Y\in\Alexnk$, there are more restrictions on the way how $X_\ell\in\Alex^n(\kappa)$ are glued. Let $G_Y=\{y\in Y\colon |f^{-1}(y)|>1\}$ and $G_X=f^{-1}(G_Y)$. There is a natural stratification of the glued and gluing points. Namely,
$$G_Y^m=\left\{y\in Y\colon\; \left|f^{-1}(y)\right|=m\right\}
  \text{\quad and \quad} G_X^m=f^{-1}(G_Y^m).
$$

\begin{theorem}[\cite{Li15-1}]\label{t:Li15-1-1-onto}
  Let the assumptions be the same as in Theorem \ref{t:Li15-1-1-lip}. If $f$ is onto, then the following hold.
  \begin{enumerate}
    \renewcommand{\labelenumi}{(\arabic{enumi})}
      \setlength{\itemsep}{1pt}
      \item $G_X\subseteq\partial X$.
      \item For any $y\in Y$,  we have $\big|f^{-1}(y)\big|<\infty$. Moreover, $$\max\{m:G_Y^m\neq\varnothing\}\le C(n,\kappa,d_0,v_0),$$
          where $d_0=\underset{1\le\ell\le N_0}\max\{\text{diam}(X_\ell)\}$ and $v_0=\underset{1\le\ell\le N_0}\min\{\vol{X_\ell}\}$.
      \item If $G_X\neq\varnothing$, then for any point ${x}\in G_X$ and $r>0$, we have
        $$\dim_\cH\Big(G_X^2 \cap B_r({x})\Big)=\dim_\cH\Big(G_Y^2 \cap B_r(f(x))\Big)=n-1.$$
      \item $$\dim_\cH\left(\scup{m=3}{\infty}G_X^m\right) =\dim_\cH\left(\scup{m=3}{\infty}G_Y^m\right)
        \le n-2.$$
\end{enumerate}
\end{theorem}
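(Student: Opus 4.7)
The strategy is to build on Theorem \ref{t:Li15-1-1-lip}, which already tells us $f$ is a path isometry and $f|_{X^\circ}$ is an intrinsic isometry. A key auxiliary fact, which I will call \emph{essential injectivity}, is used throughout: for any disjoint measurable sets $A, B \subseteq X$, the decomposition $X = A \sqcup B$ combined with the 1-Lipschitz bound and $\cH^n(X) = \cH^n(Y)$ force
\begin{equation*}
  \cH^n(Y) = \cH^n\bigl(f(A) \cup f(B)\bigr) \le \cH^n(f(A)) + \cH^n(f(B)) - \cH^n\bigl(f(A) \cap f(B)\bigr) \le \cH^n(X) - \cH^n\bigl(f(A) \cap f(B)\bigr),
\end{equation*}
so $\cH^n(f(A) \cap f(B)) = 0$ and consequently $\cH^n(f(E)) = \cH^n(E)$ for every measurable $E \subseteq X$.

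For (1), I would argue by contradiction. If $f(x_1) = f(x_2) = y$ with $x_1 \in X^\circ$ and $x_1 \neq x_2$, then for small $\rho > 0$ the balls $B_\rho(x_1) \subseteq X^\circ$ and $B_\rho(x_2)$ are disjoint; essential injectivity yields
\begin{equation*}
  \cH^n\bigl(B_\rho(x_1)\bigr) + \cH^n\bigl(B_\rho(x_2)\bigr) = \cH^n\bigl(f(B_\rho(x_1)) \cup f(B_\rho(x_2))\bigr) \le \cH^n\bigl(B_\rho(y)\bigr).
\end{equation*}
Passing to tangent cones as $\rho \to 0$, the path isometry induces a path-isometric, volume-preserving map $T_{x_1}X \sqcup T_{x_2}X \to T_yY$ whose restriction to $T_{x_1}X$ is isometric, since $f|_{X^\circ}$ is a local isometry near $x_1$. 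Thus $\Sigma_{x_1}$ embeds isometrically into $\Sigma_y$; as $\Sigma_{x_1}$ has no boundary and both are $(n-1)$-dimensional Alexandrov spaces of curvature $\ge 1$, the image is clopen and hence fills a full connected component of $\Sigma_y$. Essential injectivity then forces the image of $\Sigma_{x_2}$ to have $\cH^{n-1}$-measure zero, contradicting $\dim_\cH(\Sigma_{x_2}) = n-1$.

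For (2), fix $y \in Y$ with distinct preimages $x_1, \ldots, x_m$ and pick $\rho > 0$ so that the balls $B_\rho(x_i)$ are pairwise disjoint. Essential injectivity and the 1-Lipschitz property give $\sum_{i=1}^m \cH^n(B_\rho(x_i)) \le \cH^n(B_\rho(y))$. Relative Bishop--Gromov applied to each component $X_{\ell_i} \ni x_i$ gives $\cH^n(B_\rho(x_i))/v_\kappa(n,\rho) \ge \cH^n(X_{\ell_i})/v_\kappa(n, d_0) \ge v_0/v_\kappa(n, d_0)$, where $v_\kappa(n, r)$ denotes the volume of an $r$-ball in the $n$-dimensional model space of curvature $\kappa$; the same inequality on $Y$ gives $\cH^n(B_\rho(y)) \le v_\kappa(n, \rho)$. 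Combining, $m \le v_\kappa(n, d_0)/v_0 = C(n, \kappa, d_0, v_0)$, uniform in $y$ and $\rho$.

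For (3) and (4), the plan is to pass to tangent cones once more. Near $x \in G_X$ with $f^{-1}(y) = \{x_1, \ldots, x_m\}$, the blow-up of $f$ produces a path-isometric, volume-preserving, onto map $\bigsqcup_i T_{x_i}X \to T_yY$, so the entire gluing structure is reproduced at the infinitesimal scale. By (1) each $x_i \in \partial X$, so the identifications happen on the boundary cones. For $m = 2$, volume matching forces the two boundary cones to be glued along a full $(n-1)$-dimensional slab, yielding $\dim_\cH(G_X^2 \cap B_r(x)) = n-1$; otherwise the unmatched boundary pieces would add separately to $\cH^n(T_yY)$, violating the sharp equality. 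For $m \ge 3$, the locus where three or more boundary pieces meet simultaneously is codimension $\ge 2$, by the combinatorial analysis that controls face-incidence in polyhedral Alexandrov spaces, establishing $\dim_\cH\bigl(\bigcup_{m \ge 3} G_X^m\bigr) \le n - 2$. The hardest step will be the lower bound $\dim_\cH(G_X^2 \cap B_r(x)) \ge n - 1$, because it requires ruling out the possibility that the identification set is only lower-dimensional inside $\partial X$; I expect this to use a delicate volume balance comparing boundary contributions on both sides together with the rigidity that an $n$-dimensional Alexandrov space cannot host ``hidden'' boundary inside a region already covered by the interior of an $n$-dimensional Alexandrov space without boundary.
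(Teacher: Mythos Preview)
The paper does not actually prove Theorem~\ref{t:Li15-1-1-onto}: this is a survey, and the theorem is merely quoted from \cite{Li15-1} with no argument given here. So there is no ``paper's own proof'' to compare your proposal against.

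That said, your outline is broadly reasonable for (1) and (2). The essential-injectivity observation and the Bishop--Gromov packing argument for (2) are exactly the right tools. For (1), your tangent-cone argument is on the right track, but the step ``essential injectivity then forces the image of $\Sigma_{x_2}$ to have $\cH^{n-1}$-measure zero'' needs more care: you need to know that the differential $df_{x_1}\colon T_{x_1}X\to T_yY$ is \emph{onto} (not just an isometric embedding), so that $\cH^{n-1}(\Sigma_y)=\cH^{n-1}(\Sigma_{x_1})$; then the volume inequality $\cH^{n-1}(\Sigma_{x_1})+\cH^{n-1}(\Sigma_{x_2})\le\cH^{n-1}(\Sigma_y)$ obtained from blowing up $\sum_i\cH^n(B_\rho(x_i))\le\cH^n(B_\rho(y))$ finishes it. The surjectivity of $df_{x_1}$ is not automatic from ``$f|_{X^\circ}$ is an intrinsic isometry''; you should justify why $f(B_\rho(x_1))$ is a full neighborhood of $y$ in $Y$.

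For (3) and (4) you have only a heuristic. The phrase ``combinatorial analysis that controls face-incidence in polyhedral Alexandrov spaces'' is not an argument, and the spaces in question are not polyhedral. The actual proof in \cite{Li15-1} of the upper bound in (4) and the lower bound in (3) requires a genuine induction on dimension together with a dimension-comparison lemma (cf.\ Lemma~\ref{map.dim} in this survey) and a structural analysis of how the boundaries $\partial X_\ell$ are identified; the ``volume balance'' idea alone does not pin down $\dim_\cH(G_X^2\cap B_r(x))\ge n-1$, since a priori the gluing set could be a lower-dimensional subset of $\partial X$ without violating any volume identity. You have correctly identified this as the hardest step, but you have not supplied the missing idea.
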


 By (3) and (4), the gluing structure is uniquely determined by the way how $G_X^2$ is glued. The gluing in the case $N_0=1$ (for example, Theorem \ref{t:rmv-rig}) is called {\it self-gluing}. In fact, Theorem \ref{t:Li15-1-1-onto} shows that without losing volume or increasing the distance, the metric on a connected Alexandrov space is ``rigid'' up to a boundary gluing by isometry.

The map $f$ is an isometry in some special cases.

\begin{corollary}[\cite{Li15-1}]\label{shrinking.cor}
  Under the same assumptions as in Theorem \ref{t:Li15-1-1-onto}, if any of the following is satisfied then $N_0=1$ and $f$ is an isometry.
  \begin{enumerate}
  \item $\partial X_\ell=\varnothing$ for some $\ell$.
  \item $G_X=\varnothing$, i.e., $f$ is injective.
  \item $G_Y\subseteq\partial Y$.
  \item $f(\partial X)\subseteq \partial Y$.
  \end{enumerate}
\end{corollary}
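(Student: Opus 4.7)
The plan is to reduce all four hypotheses to the base case (2) through a sequence of implications, with case (3) providing the main technical content. The hierarchy is: (4) $\Rightarrow$ (3), and (3) $\Rightarrow$ $G_X = \varnothing$ $\Rightarrow$ (2), while (1) reduces to (2) by a direct connectedness argument.

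In case (2), since $f$ is injective the compact images $f(X_1), \ldots, f(X_{N_0})$ are pairwise disjoint closed subsets covering the connected space $Y$, which forces $N_0 = 1$. Then $f\colon X_1 \to Y$ is a continuous bijection from a compact space onto a Hausdorff space, hence a homeomorphism; combined with the path-isometry property, both $f$ and $f^{-1}$ preserve lengths of paths between length spaces, so $f$ is an isometry. In case (1), the assumption $\partial X_\ell = \varnothing$ together with Theorem \ref{t:Li15-1-1-onto}(1) gives $X_\ell \cap G_X = \varnothing$; consequently $f(X_\ell) \cap f(X \setminus X_\ell) = \varnothing$, since any common image point would create a preimage in $X_\ell \cap G_X$. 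These disjoint closed subsets cover $Y$, so connectedness forces $X = X_\ell$ and $G_X = \varnothing$, reducing to case (2). In case (4), using $G_X \subseteq \partial X$ from Theorem \ref{t:Li15-1-1-onto}(1) and $G_Y = f(G_X)$ (since $f$ is onto), the hypothesis $f(\partial X) \subseteq \partial Y$ immediately gives $G_Y \subseteq \partial Y$, which is case (3).

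Case (3) is handled by contradiction. Suppose $G_X \neq \varnothing$. By Theorem \ref{t:Li15-1-1-onto}(3), $\cH^{n-1}(G_Y^2) > 0$, and by hypothesis $G_Y^2 \subseteq G_Y \subseteq \partial Y$. Since the set of non-regular boundary points of any $n$-dimensional Alexandrov space has Hausdorff dimension at most $n - 2$, one can pick $y \in G_Y^2$ with $T_y Y = \RR^n_+$; after also discarding the $\cH^{n-2}$-small non-regular boundary parts on the $X$-side, both preimages $x_1, x_2 \in G_X$ can be taken to satisfy $T_{x_i} X_{\ell_i} = \RR^n_+$. Regularity then yields, as $r \to 0$,
\begin{align*}
\cH^n(B_r(y)) &= \tfrac12 \omega_n r^n + o(r^n), \\
\cH^n(B_r(x_i) \cap X_{\ell_i}) &= \tfrac12 \omega_n r^n + o(r^n), \quad i = 1, 2.
\end{align*}
Since $f$ is $1$-Lipschitz, $f(B_r(x_i)^\circ) \subseteq B_r(y)$, and since $f|_{X^\circ}$ is injective and measure-preserving (Theorem \ref{t:Li15-1-1-lip}), the two images are disjoint in $B_r(y)$ and have combined $\cH^n$-measure $\omega_n r^n + o(r^n)$, which exceeds $\cH^n(B_r(y))$ for small $r$. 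This contradiction forces $G_X = \varnothing$, placing us in case (2).

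The most delicate step is the simultaneous genericity in case (3): selecting $y \in G_Y^2$ so that $y$ and both preimages $x_1, x_2$ are regular boundary points of their respective spaces. This combines the $(n-1)$-dimensionality of $G_Y^2$ near gluing points (Theorem \ref{t:Li15-1-1-onto}(3)), the standard Hausdorff-codimension-two estimate for non-regular boundary points in Alexandrov geometry, and the transfer of the measure bound from $Y$ to $X$ through the $1$-Lipschitz map $f$ on $G_X^2$. Once such a generic $y$ exists, the volume inequality is an immediate application of Bishop-Gromov densities at regular boundary points.
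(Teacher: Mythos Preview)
The paper states this corollary with a citation to \cite{Li15-1} and does not give a proof, so there is nothing here to compare your argument against. Your proof is correct and well-organized; a couple of minor points:

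In cases (1) and (2) you tacitly use that each $X_\ell$ is compact so that $f(X_\ell)\subseteq Y$ is closed. This is consistent with the standing hypotheses of Theorem~\ref{t:Li15-1-1-onto} (note the constant $d_0=\max_\ell\diam(X_\ell)$ appearing there), but it is worth making explicit.

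In case (3) the assertion $\cH^{n-1}(G_Y^2)>0$ is a slight overreach: Theorem~\ref{t:Li15-1-1-onto}(3) only gives $\dim_\cH G_Y^2=n-1$, which does not by itself imply positive $\cH^{n-1}$-measure. Fortunately your argument does not actually need this. What you need is that $G_Y^2$ is not contained in the union of the non-regular boundary points of $Y$ and the $f$-image of the non-regular boundary points of $X$; both of these sets have Hausdorff dimension at most $n-2$ (the second because $f$ is $1$-Lipschitz), so their union cannot cover a set of dimension $n-1$. You essentially say this in your last paragraph, so the logic stands; just delete the positive-$\cH^{n-1}$ claim and argue directly with dimension.

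The volume contradiction at the chosen regular point $y$ is then exactly right: injectivity of $f|_{X^\circ}$ from Theorem~\ref{t:Li15-1-1-lip} together with the $1$-Lipschitz bound forces two disjoint half-ball volumes into a single half-ball, which is impossible for small $r$.
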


In the last of this section, we discuss some applications of the Lipschitz-Rigidity Theorem in ALexandrov Geometry. First, Theorem \ref{t:Li15-1-1-onto} implies Theorem \ref{t:rmv-rig} with some extra work on the involution part. 
For applications on gluing, let's recall the following theorem.

%
%

\begin{theorem}[Petrunin, \cite{Pet97}]\label{pet.glu}
The gluing of $X_1, X_2\in\Alex^n(\kappa)$ via an isometry $\phi\colon \partial X_1\to\partial X_2$ produces an Alexandrov space with the same lower curvature bound.
\end{theorem}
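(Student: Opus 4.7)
The plan is to verify Toponogov's four-point (equivalently, hinge) comparison for the quotient length space $Y = (X_1 \sqcup X_2)/(z \sim \phi(z))$, with gluing locus $\partial := \phi(\partial X_1) = \partial X_2 \subset Y$. Any four-point configuration whose realizing geodesics stay within a single $X_i$ inherits the $\kappa$-comparison by hypothesis, so the entire content of the theorem lies in configurations whose geodesics traverse $\partial$.

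First I would describe shortest paths crossing $\partial$. A geodesic from $x_1 \in X_1^\circ$ to $x_2 \in X_2^\circ$ passes through a point $p \in \partial$ minimizing $w \mapsto |x_1 w|_{X_1} + |\phi(w)\, x_2|_{X_2}$, with each sub-arc minimizing inside its $X_i$. The first-variation condition at the optimal $p$ imposes a compatibility, via $d\phi_p$, between the incoming direction $\xi_1 \in \Sigma_p X_1$ and the outgoing direction $\xi_2 \in \Sigma_p X_2$: the broken geodesic is straight to first order across the glue. In particular, for every $p \in \partial$ the space of directions $\Sigma_p Y$ is the spherical gluing of $\Sigma_p X_1$ and $\Sigma_p X_2$ along $\Sigma_p \partial X_1 \cong \Sigma_p \partial X_2$ via $d\phi_p$.

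Next I would invoke Toponogov's globalization to reduce the problem to verifying the four-point comparison on a small ball around each $p \in Y$. The case $p \in X_i^\circ$ is automatic. For $p \in \partial$, the local verification is equivalent to establishing $\Sigma_p Y \in \Alex^{n-1}(1)$, which opens an induction on dimension: $\partial X_i \in \Alex^{n-1}(\kappa)$ by the Perelman-Petrunin boundary theorem, and the base case $n \le 2$ reduces to planar inspection of hinge inequalities.

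The principal obstacle is the hinge comparison itself at a glue vertex $p \in \partial$. The natural approach is to unfold a hinge whose side crosses $\partial$ into a comparison hinge in the model plane $\dS^2_\kappa$ and bound the model angle by the actual angle at $p$. Petrunin's technique leverages the semiconcavity of $d(\cdot, \partial X_i)$ on each $X_i$, which yields a distance-non-increasing gradient-flow retraction onto $\partial$ compatible with the first-variation data from Step~1; together with the inductive control on $\Sigma_p Y$ and a limiting argument, this should deliver the required angle estimate. The delicate case will be the boundary-tangent regime, when some $\xi_i$ lies close to $\Sigma_p \partial X_i$, where the gradient flow degenerates and one must fall back on refined semiconcave comparison estimates to close the argument.
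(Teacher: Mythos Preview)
The paper does not prove this statement; it is quoted as Petrunin's theorem with a citation to \cite{Pet97} and then used as input to Theorem~\ref{pet.iff}. There is therefore no ``paper's own proof'' to compare against.

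As for your sketch itself: the overall architecture (globalization reduces to a local check near $p\in\partial$; analyze crossing geodesics via first variation; exploit the semiconcavity of $d(\cdot,\partial X_i)$) is indeed the spirit of Petrunin's argument. But one step is not right as written. You assert ``$\partial X_i\in\Alex^{n-1}(\kappa)$ by the Perelman--Petrunin boundary theorem'' and base your induction on that. Whether the boundary of an Alexandrov space, with its induced intrinsic metric, is itself an Alexandrov space is a well-known open problem; it is not a theorem you can invoke. What \emph{is} available is that $\Sigma_pX_i\in\Alex^{n-1}(1)$ and that $\partial(\Sigma_pX_i)=\Sigma_p(\partial X_i)$, so the inductive statement should be formulated as ``the gluing of two $(n-1)$-dimensional Alexandrov spaces of curvature $\ge 1$ along isometric boundaries is again in $\Alex^{n-1}(1)$,'' applied to the spaces of directions, not to $\partial X_i$ themselves. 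With that correction the induction is at least well-posed; the remaining work, as you note, is the hinge estimate for geodesics that cross $\partial$, and here Petrunin's proof relies less on an unfolding picture and more directly on the comparison inequalities for $d(\cdot,\partial X_i)$ together with Alexandrov's lemma.
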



The following result, conjectured by A. Petrunin, follows from Theorem \ref{t:Li15-1-1-onto} and Petrunin's Gluing Theorem.


\begin{theorem}[\cite{Li15-1}]\label{pet.iff}
  Assume that $X_1, X_2\in\Alex^n(\kappa)$ are glued via an identification $x\sim \phi(x)$, where $\phi:\partial X_1\to\partial X_2$ is a bijection. Then the glued space $Y=X_1\amalg X_2/(x\sim \phi(x))$ is an Alexandrov space if and only if $\phi$ is an isometry with respect to the intrinsic metrics of $\partial X_1$ and $\partial X_2$.
\end{theorem}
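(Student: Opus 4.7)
The plan is to treat the two implications separately. The forward direction is an immediate invocation of Petrunin's Gluing Theorem (Theorem \ref{pet.glu}): once $\phi$ is an isometry with respect to the intrinsic metrics on $\partial X_1$ and $\partial X_2$, the glued space lies in $\Alex^n(\kappa)$ by Petrunin's theorem. So the substance lies in the converse.

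For the converse, assume $Y \in \Alex^n(\kappa)$, and consider the natural projection $f\colon X = X_1\amalg X_2 \to Y$. By construction $f$ is 1-Lipschitz and onto. Since $\partial X_i$ has Hausdorff dimension at most $n-1$, the identified set has $\cH^n$-measure zero, so
$$\cH^n(X) = \cH^n(X_1) + \cH^n(X_2) = \cH^n(Y) = \cH^n(f(X)).$$
Theorem \ref{t:Li15-1-1-lip} then gives that $f$ is a path isometry. For any continuous curve $\gamma\colon [0,1]\to \partial X_1$, we have $f(\gamma(t)) = f(\phi(\gamma(t)))$ for every $t$, so $f\circ\gamma$ and $f\circ(\phi\circ\gamma)$ coincide as curves in $Y$. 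Applying the path isometry property to each side yields
$$\cL_{d_{X_1}}(\gamma) = \cL_{d_Y}(f\circ\gamma) = \cL_{d_{X_2}}(\phi\circ\gamma),$$
so $\phi$ preserves the length of every boundary curve. Taking infima over curves joining $p$ to $q$ in $\partial X_1$ then gives $d_{\partial X_2}(\phi(p),\phi(q)) \le d_{\partial X_1}(p,q)$, and the reverse inequality follows by running the same argument for $\phi^{-1}$. This yields the required intrinsic isometry.

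The main subtlety I anticipate is justifying that $\phi \circ \gamma$ is indeed a continuous curve, i.e., that $\phi$ is a homeomorphism. This should be extracted from the fiber structure of $f$: if $x_n \to x$ in $\partial X_1$, then by local compactness some subsequence $\phi(x_{n_k})$ converges to a point $y \in \partial X_2$, and continuity of $f$ forces $f(y) = \lim f(\phi(x_{n_k})) = f(\phi(x))$; the bijectivity of $\phi$ together with the description of the fibers of $f$ over the gluing locus (namely $f^{-1}(f(\phi(x))) = \{x, \phi(x)\}$) then forces $y = \phi(x)$, and the same reasoning applied to $\phi^{-1}$ gives continuity of the inverse. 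Once this topological step is in place, the rest of the argument is routine modulo Theorems \ref{pet.glu} and \ref{t:Li15-1-1-lip}, which do all the geometric heavy lifting.
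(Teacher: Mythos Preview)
Your approach is essentially the paper's: one implication is Petrunin's Gluing Theorem, and the converse feeds the projection $f$ into the Lipschitz--Volume rigidity machinery (the paper cites Theorem \ref{t:Li15-1-1-onto}, you cite Theorem \ref{t:Li15-1-1-lip}, but these share hypotheses and the path--isometry conclusion you use is exactly the ``glued curves have equal length'' argument the paper spells out right after Theorem \ref{t:Li15-1-1-lip}).

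One point to tighten: in your continuity argument you invoke $f^{-1}(f(\phi(x)))=\{x,\phi(x)\}$, but recall that the glued space is defined via the quotient \emph{pseudometric} $d_{\mathcal R}$, which in principle can identify more boundary points than the relation $x\sim\phi(x)$ itself (for a wild bijection $\phi$, short chains alternating between $\partial X_1$ and $\partial X_2$ could force extra collapsing). So this fiber description is not automatic. This is precisely where Theorem \ref{t:Li15-1-1-onto} (rather than just Theorem \ref{t:Li15-1-1-lip}) earns its keep: item (1) confines $G_X$ to $\partial X$, and items (3)--(4) show that away from a set of Hausdorff dimension at most $n-2$ in $\partial X$ the fibers have exactly two elements, so your fiber description holds there. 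Since $\partial X_1\in\Alex^{n-1}(\kappa)$, this residual set is thin enough that your continuity and length--preservation arguments go through on a dense set and then extend. With that adjustment the proposal is correct and matches the paper's route.
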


Let $p\in X\in\Alexnk$ and $\Sigma_p$ denote the space of directions of $X$ at $p$. It is known that there is a distance non-decreasing map $\dsp h\colon \Sigma_p\to\lim_{i\to\infty}\Sigma_{p_i}$, provided $p_i\to p$ and $\dsp\lim_{i\to\infty}\Sigma_{p_i}$ exists (Theorem 7.14, \cite{BGP}). However, it may happen that $\dsp\lim_{i\to\infty}\Sigma_{p_i}\neq\Sigma_p$. For example, regular points can converge to a singular point. 
In \cite{Pet98}, it was proved that $\Sigma_q$ is isometric to $\Sigma_p$ if they are interior points along a fixed geodesic. As an application of Corollary \ref{shrinking.cor} (3), we have the following theorem.





\begin{theorem}[LV-rigidity of spaces of directions, \cite{Li15-1}]\label{shrink.spd}
  Let $X_i\in\Alexnk$. Suppose that $(X_i, p_i)\overset{d_{GH}}{\longrightarrow}(X, p)$. Then $\dsp\lim_{i\to\infty}\Sigma_{p_i}=\Sigma_p$ if and only if  $\dsp\lim_{i\to\infty}\cH^{n-1}\left(\Sigma_{p_i}\right)=\cH^{n-1}\left(\Sigma_p\right)$.
\end{theorem}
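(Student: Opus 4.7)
The plan is to derive both implications from the BGP map of Theorem 7.14 in \cite{BGP} combined with Corollary \ref{shrinking.cor}. Throughout, recall that spaces of directions of $n$-dimensional Alexandrov spaces with curvature $\geq\kappa$ are $(n-1)$-dimensional Alexandrov spaces with curvature $\geq 1$, so $\Sigma_{p_i},\Sigma_p\in\Alex^{n-1}(1)$.

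For the forward direction, assume $\Sigma_{p_i}\to\Sigma_p$ in the Gromov--Hausdorff sense. Since both sides lie in $\Alex^{n-1}(1)$ with the same dimension $n-1$, this convergence is non-collapsed, and the standard volume continuity theorem for Alexandrov spaces immediately yields $\cH^{n-1}(\Sigma_{p_i})\to\cH^{n-1}(\Sigma_p)$.

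For the backward direction, assume $\cH^{n-1}(\Sigma_{p_i})\to\cH^{n-1}(\Sigma_p)$. By the Gromov precompactness of $\Alex^{n-1}(1)$, after passing to a subsequence I may assume $\Sigma_{p_i}\to\Sigma_\infty$ for some compact length space $\Sigma_\infty$. Since $\cH^{n-1}(\Sigma_p)>0$ and Alexandrov volume vanishes in any collapsing limit, $\Sigma_\infty$ is itself $(n-1)$-dimensional and $\cH^{n-1}(\Sigma_\infty)=\cH^{n-1}(\Sigma_p)$ by non-collapsed continuity. Theorem 7.14 of \cite{BGP} supplies a distance non-decreasing map $h\colon\Sigma_p\to\Sigma_\infty$, which is automatically injective since $h(x)=h(y)$ forces $|xy|=0$. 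For surjectivity, $h(\Sigma_p)$ is compact hence closed in $\Sigma_\infty$, while the distance non-decreasing property gives $\cH^{n-1}(h(\Sigma_p))\geq\cH^{n-1}(\Sigma_p)=\cH^{n-1}(\Sigma_\infty)$, so the open complement $\Sigma_\infty\setminus h(\Sigma_p)$ has zero $(n-1)$-measure; but a non-empty open subset of an $(n-1)$-dimensional Alexandrov space has positive $(n-1)$-measure, so this complement must be empty.

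Consequently $f:=h^{-1}\colon\Sigma_\infty\to\Sigma_p$ is a 1-Lipschitz bijection onto $\Sigma_p$ with $\cH^{n-1}(\Sigma_\infty)=\cH^{n-1}(f(\Sigma_\infty))$, and since $f$ is injective one has $G_{\Sigma_p}=\varnothing\subseteq\partial\Sigma_p$; Corollary \ref{shrinking.cor}(3) then applies with $N_0=1$ to conclude that $f$ is an isometry. Hence $\Sigma_\infty\cong\Sigma_p$, and since every subsequential GH limit of $\Sigma_{p_i}$ is isometric to $\Sigma_p$, the full sequence converges to $\Sigma_p$. The main obstacle is the surjectivity step for $h$, which requires combining the volume equality with the fact that non-empty open subsets of a pure-dimensional Alexandrov space carry positive top-dimensional Hausdorff measure; once that is in hand, the remainder reduces to the LV-rigidity machinery of Theorem \ref{t:Li15-1-1-onto}.
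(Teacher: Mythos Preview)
Your argument is correct and follows the route the paper indicates: invert the BGP distance non-decreasing map $h\colon\Sigma_p\to\Sigma_\infty$ to obtain a $1$-Lipschitz, onto, volume-preserving map $\Sigma_\infty\to\Sigma_p$ between spaces in $\Alex^{n-1}(1)$, and then invoke Corollary~\ref{shrinking.cor}. The only cosmetic point is that, having shown $f=h^{-1}$ is a bijection, you are really using clause~(2) of Corollary~\ref{shrinking.cor} ($G_X=\varnothing$) rather than the clause~(3) the paper singles out; since $G_Y=\varnothing\subseteq\partial Y$ in your setup, the two conditions coincide here.
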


\section{Globalization in ALexandrov Geometry}\label{sec:glob}

Let us start with Petrunin's Gluing Theorem (Theorem \ref{pet.glu}). Using the notion of Theorem \ref{t:Li15-1-1-onto}, Petrunin's Theorem requires that $N_0=2$ and  $G_X=G_X^2=\partial X=\partial X_1\amalg\partial X_2$. One can ask whether such a gluing theorem can be generalized to the gluing of more than two Alexandrov spaces and $G_X\neq G_X^2$. Note that Theorem \ref{t:Li15-1-1-onto} in fact provides some of the necessary conditions. However, these conditions are not sufficient.

\begin{example}\label{eg.non.extremal} Let $\triangle A_1B_1C_1$ and $\triangle A_2B_2C_2$ be two triangle planes in $\dR^2$, with $|B_1C_1|=|B_2C_2|$. Glue the two triangles via identification $B_1C_1\sim B_2C_2$. The resulted space $Y\in\Alex^2(0)$ if and only if
$$\max\Big\{\measuredangle A_1B_1C_1+\measuredangle A_2B_2C_2, \, \measuredangle A_1C_1B_1+\measuredangle A_2C_2B_2\Big\}\le\pi.$$
If the above inequality is not satisfied, then $Y$ is strictly concave. Thus it is not an Alexandrov space.

\begin{center}\includegraphics[scale=1]{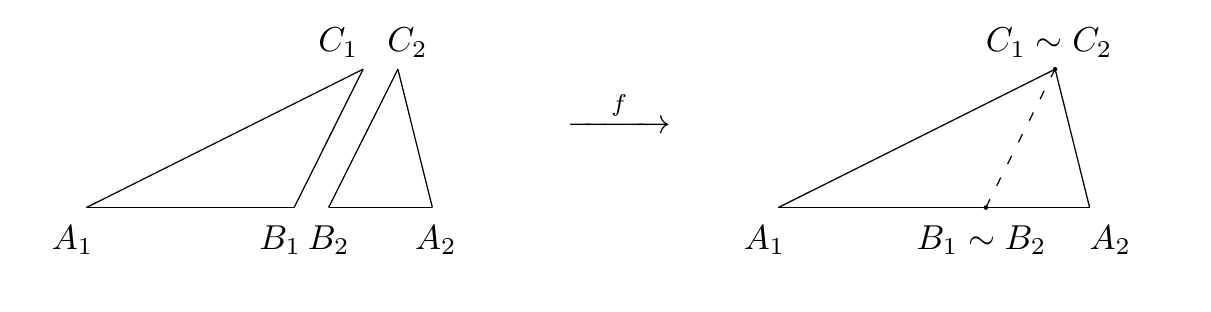}\end{center}

\end{example}

Based on this observation, we have the following conjecture.

\begin{conjecture}[\cite{Li15-1}]\label{conj:g.Alex}
  A gluing along boundaries of $n$-dimensional Alexandrov spaces produces an Alexandrov space $Y$ if and only if the gluing is by isometry and for each $p\in Y$, the tangent cone $\dsp T_p(Y)=\lim_{r\to 0}(Y, p, r^{-1}d)$ is a metric cone over $\Sigma_p$, where $\Sigma_p\in\Alex^{n-1}(1)$.
\end{conjecture}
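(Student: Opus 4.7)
The ``only if'' direction is a direct consequence of the results collected earlier. If $Y=(\amalg_\ell X_\ell)/\!\sim$ lies in $\Alex^n(\kappa)$, the canonical projection $f\colon \amalg_\ell X_\ell\to Y$ is $1$-Lipschitz, onto, and the gluing locus, being contained in the union of the boundaries $\partial X_\ell$, has $\cH^n$-measure zero; hence $\cH^n(\amalg_\ell X_\ell)=\cH^n(Y)$ and Theorem~\ref{t:Li15-1-1-onto} applies, showing $f$ is a path isometry, which by the discussion following that theorem is exactly the statement that the gluing is by isometry of boundaries. The tangent-cone statement is the standard Burago--Gromov--Perelman fact that in every finite-dimensional Alexandrov space, $T_pY$ is the metric cone over $\Sigma_p\in\Alex^{n-1}(1)$.

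The substantive direction is ``if''. The plan is to verify the local $(1{+}3)$-point comparison with curvature $\ge\kappa$ at every $p\in Y$ and then invoke Toponogov's globalization theorem. At interior points of a single $X_\ell$ the comparison is inherited, so the issue concentrates on the gluing set $G_Y$. I would first treat the generic stratum $G_Y^2$: at such $p$ a small neighborhood is isometric to a two-piece boundary gluing of small half-balls in $X_1$ and $X_2$ along an isometry supplied by hypothesis, so Petrunin's Gluing Theorem~\ref{pet.glu} applies directly and yields the local comparison.

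For the higher-multiplicity stratum $\bigcup_{m\ge 3}G_Y^m$, which the necessary conditions of Theorem~\ref{t:Li15-1-1-onto} force to have Hausdorff codimension $\ge 2$, the plan has two ingredients. First, the previous step establishes local curvature $\ge\kappa$ on the open set $\mathcal U=Y\setminus\bigcup_{m\ge 3}G_Y^m$, and Theorem~\ref{t:met.comp.Alex} identifies the metric completion of $(\mathcal U,d_{\mathcal U})$ with $Y$; the comparison then transfers to $Y$ by a standard limiting argument once one shows that every geodesic triangle in $Y$ with a vertex at a multi-valent point is a Gromov--Hausdorff limit of triangles in $\mathcal U$. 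Second, the hypothesis that $T_pY$ is a metric cone over $\Sigma_p\in\Alex^{n-1}(1)$ gives $T_pY\in\Alex^n(0)$ by the standard cone-over-$\Alex(1)$ construction, supplying the infinitesimal comparison that anchors the limit.

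The main obstacle I anticipate is precisely this upgrade from infinitesimal to local comparison at multi-valent points. Having $T_pY$ Alexandrov is strictly weaker than having a neighborhood of $p$ Alexandrov: tangential information need not control metric behavior at positive scale. The delicate point is to show that short geodesics near such a $p$ can be perturbed to nearby geodesics avoiding $\bigcup_{m\ge 3}G_Y^m$, whose codimension-$\ge 2$ structure is essential here; the perturbation should proceed through a dimension-comparison argument in the spirit of Lemma~\ref{map.dim}, applied to the set of directions from $p$ that hit the bad stratum. A minor secondary point is verifying that boundary isometry of the full pieces $X_\ell$ restricts to boundary isometry of the small half-balls used in the two-piece step, which follows from the fact that the intrinsic boundary metrics of the half-balls are locally determined.
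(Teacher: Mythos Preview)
The statement you are attempting to prove is labeled a \emph{conjecture} in the paper; the paper offers no proof of the ``if'' direction, and in fact the surrounding discussion (Problem~\ref{conj:glob}, Conjecture~\ref{conj:glob.cone}, and the partial result for discrete singular sets) makes clear that this direction is open. So there is no paper's proof to compare against; the relevant question is whether your sketch closes the gap, and it does not.

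Your ``only if'' direction is essentially correct and is exactly what the paper's earlier results supply (the path-isometry conclusion is Theorem~\ref{t:Li15-1-1-lip} rather than Theorem~\ref{t:Li15-1-1-onto}, but this is a minor mis-citation).

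The ``if'' direction has two circularities that the paper itself flags as obstacles. First, you invoke Theorem~\ref{t:Li15-1-1-onto} to conclude $\dim_\cH\bigl(\bigcup_{m\ge 3}G_Y^m\bigr)\le n-2$, but that theorem has $Y\in\Alex^n(\kappa)$ as a hypothesis, which is precisely what you are trying to establish. Second, you invoke Theorem~\ref{t:met.comp.Alex} to identify $\overline{(\mathcal U,d_{\mathcal U})}$ with $Y$; again that theorem assumes the ambient space is already Alexandrov, and the paper explicitly warns (in the three-step scheme following Problem~\ref{conj:glob}) that ``this doesn't follow from Theorem~\ref{t:met.comp.Alex} directly, since $X$ is not known to be an Alexandrov space.'' Your dimension-comparison perturbation idea via Lemma~\ref{map.dim} likewise presupposes Toponogov comparison at the very points where it is in question.

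You correctly isolate the core difficulty---promoting the infinitesimal hypothesis $T_pY=C(\Sigma_p)$ with $\Sigma_p\in\Alex^{n-1}(1)$ to a local comparison near $p$---but the proposal does not supply a mechanism for this step that avoids the circularities above. The paper's own stance is that this is the open content of the conjecture; the only unconditional progress it records is the case where the bad set $X\setminus\mathcal U$ is discrete.
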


If the gluing only happens along the boundaries, then for most points, such as interior points, there exist neighborhoods in which Toponogov comparison holds. In this sense, the above conjecture might be related to the following Globalization Problem.

Let $\mathcal U$ be a length metric space, not necessarily complete. An open set $U\subseteq \mathcal U$ is said to be a $\kappa$-Alexandrov domain, if $\kappa$-Toponogov comparison holds for any geodesic triangle in $U$. A length metric space $\mathcal U$ is said to be a local $\kappa$-Alexandrov space, if for any $p\in \mathcal U$ there is a $\kappa$-Alexandrov domain $U_p\ni p$. Let $\Alex^n_{loc}(\kappa)$ be the collection of all $n$-dimensional local $\kappa$-Alexandrov spaces. Note that local Alexandrov spaces are not necessarily (global) Alexandrov spaces if it is incomplete. For example, any open domain $\mathcal U\subset\dR^n$ is a local Alexandrov space, but the closure $\bar{\mathcal U}$ is not Alexandrov if $\mathcal U$ is strictly concave.

\begin{problem}[Globalization]\label{conj:glob}
  Let $(\mathcal U, d_{\mathcal U})\in\Alex^n_{loc}(\kappa)$. Under what condition is the metric completion $\overline{(\mathcal U, d_{\mathcal U})}\in \Alex^n(\kappa)$?
\end{problem}

Establishing a Globalization Theorem like this might help us to prove a space to be an Alexandrov space. For example, to prove $X\in \Alex^n(\kappa)$, one can take $\mathcal U\subset X$ with $\cH^{n-1}(X\setminus \mathcal U)=0$ and carry out the following three steps.
\begin{enumerate}
  \renewcommand{\labelenumi}{(\arabic{enumi})}
  \setlength{\itemsep}{1pt}
  \item Prove $(\mathcal U, d_{\mathcal U})\in\Alex^n_{loc}(\kappa)$.
  \item Prove that the metric completion $\overline{(\mathcal U, d_{\mathcal U})}\in \Alex^n(\kappa)$ by some Globalization Theorems.
  \item Prove that $\overline{(\mathcal U, d_{\mathcal U})}$ is isometric to $(X, d_X)$. Note that this doesn't follow from Theorem \ref{t:met.comp.Alex} directly, since $X$ is not known to be an Alexandrov space.
\end{enumerate}

Note that $X\in\Alex^n(\kappa)$ requires that triangle comparison holds for any size of triangles in $X$. Thus Problem \ref{conj:glob} is even not trivial when $\mathcal U=X$. This turns out to be a fundamental theorem in ALexandrov Geometry.

\begin{theorem}[\cite{BGP}]
  Let $(\mathcal U, d_{\mathcal U})\in\Alex^n_{loc}(\kappa)$. If $(\mathcal U, d_{\mathcal U})$ is complete, then $(\mathcal U, d_{\mathcal U})=\overline{(\mathcal U, d_{\mathcal U})}\in \Alex^n(\kappa)$.
\end{theorem}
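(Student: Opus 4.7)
The identity $(\mathcal U, d_\mathcal U) = \overline{(\mathcal U, d_\mathcal U)}$ is automatic since a complete metric space is its own completion, so the content is to upgrade the local $\kappa$-Alexandrov condition on $\mathcal U$ to a global Toponogov comparison. The plan is to show by contradiction that no geodesic triangle in $\mathcal U$ violates the hinge (equivalently, four-point) comparison, using completeness to extract a minimal counterexample and Alexandrov's Lemma to contradict minimality.

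The argument has three steps. First, reduce to hinge comparison: it suffices to prove that for every geodesic triangle $\triangle pqr \subset \mathcal U$ (of perimeter less than $2\pi/\sqrt{\kappa}$ when $\kappa > 0$) the side $|qr|$ is at most the corresponding side in the $\kappa$-model-plane triangle with sides $|pq|, |pr|$ and included angle $\measuredangle qpr$; existence of geodesics between arbitrary points of $\mathcal U$ follows from completeness plus the local geodesic-extension property built into each $\kappa$-Alexandrov domain $U_p$. Second, assume the comparison fails for some triangle by a quantitative defect $\ge \epsilon > 0$, and use completeness together with an Arzel\`a-Ascoli argument for unit-speed geodesics of bounded length to extract an $\epsilon$-bad triangle $\triangle pqr$ of minimal perimeter. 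Third, split this minimal triangle at the midpoint $m$ of one side into two sub-triangles $\triangle pqm$ and $\triangle pmr$: each has strictly smaller perimeter, so by minimality each satisfies $\kappa$-comparison, and Alexandrov's Lemma glues these comparisons to force the original triangle to satisfy comparison as well, a contradiction. Iterating the midpoint subdivision reaches the base case, in which every piece fits inside some local $\kappa$-Alexandrov domain $U_p$; a Lebesgue-number argument along the compact sides of the original triangle ensures this happens in finitely many steps.

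The main obstacle is the reassembly step. Alexandrov's Lemma needs the two sub-triangles to share a geodesic $[pm]$ along which the hinge angles at $m$ interact in the correct monotone direction, and one must verify that midpoint subdivision actually produces this monotonicity and not merely two separate small comparisons; the clean way to organize this is to phrase the whole induction in terms of monotonicity of the comparison angle $\tilde\measuredangle_\kappa(pm, q)$ as $m$ slides along $[qr]$. A secondary subtlety is that midpoint subdivision shrinks two sides but leaves the third long, so one must perform the split on the longest side, or interlace splits of all sides, to force every sub-triangle to eventually land in a single local Alexandrov domain. Completeness enters decisively twice: it delivers the minimal bad triangle as a limit of bad triangles of decreasing perimeter, and it supplies the geodesics required to even formulate triangle comparison at every intermediate scale. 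Once these ingredients are in place, the contradiction from Alexandrov's Lemma closes the argument and yields $(\mathcal U, d_\mathcal U) \in \Alex^n(\kappa)$.
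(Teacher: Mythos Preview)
The paper does not prove this theorem; it is quoted as a foundational result from \cite{BGP}, so there is no in-paper argument to compare against. Judged on its own, your outline has a real gap at the minimal-counterexample step, and this gap is precisely why the BGP proof is more delicate than subdivide-and-glue.

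The midpoint split does not strictly decrease perimeter: with $m$ the midpoint of $[qr]$ one has $|pq|+|qm|+|mp|\le |pq|+|qm|+(|mr|+|rp|)=|pq|+|qr|+|rp|$, with equality possible, so perimeter-minimality of the original triangle does not force the halves to satisfy comparison. Even waiving that, your minimum was taken only over $\epsilon$-bad triangles, so a half could still be bad with defect $<\epsilon$ and hence unusable in Alexandrov's Lemma. And in a non-compact complete space the Arzel\`a--Ascoli extraction can fail outright: a minimizing sequence of bad triangles may drift to infinity, and the cover $\{U_p\}$ has no uniform Lebesgue number. The actual BGP argument avoids all of this by an extension lemma rather than a minimality argument: one fixes a vertex $p$, shows that the ``comparison radius'' at $p$ is positive by the local hypothesis, and then proves that if it were finite, comparison inside that ball together with the local condition near its boundary sphere would push comparison to a strictly larger ball, a contradiction. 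The reassembly there uses thin ``fan'' triangles along the long sides, not midpoint halving, because---as you yourself flag in the last paragraph---halving leaves the new chord $[pm]$ uncontrolled, and no interlacing of midpoint splits will force every piece into a single local domain.
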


This was generalized by A. Petrunin to the following result.

\begin{theorem}[\cite{Pet16}]\label{t:Pet.glob}
  Let $(\mathcal U, d_{\mathcal U})\in\Alex^n_{loc}(\kappa)$. If $(\mathcal U, d_{\mathcal U})$ is totally geodesic, then $\overline{(\mathcal U, d_{\mathcal U})}\in \Alex^n(\kappa)$.
\end{theorem}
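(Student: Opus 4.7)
The plan is to reduce to the classical BGP globalization theorem quoted just above, which already handles the complete case. The scheme is to show that the completion $\overline{\mathcal U}$ is itself a complete $n$-dimensional local $\kappa$-Alexandrov space; the BGP theorem then upgrades local to global $\kappa$-comparison on $\overline{\mathcal U}$. Two claims need to be established: (a) $\overline{\mathcal U}$ is a geodesic space of Hausdorff dimension $n$, and (b) every point of $\overline{\mathcal U}$ has a neighborhood in which $\kappa$-Toponogov comparison holds.

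Claim (a) is essentially formal. Completeness is inherent to the metric completion, and the length-space property is preserved. The totally geodesic hypothesis produces, for any two points of $\mathcal U$, a minimizing geodesic lying in $\mathcal U$. Given $\bar p,\bar q\in\overline{\mathcal U}$, approximate by sequences $p_i\to\bar p$, $q_i\to\bar q$ in $\mathcal U$, take minimizers $\gamma_i\subset\mathcal U$ joining them, and extract a uniform Arzel\`a--Ascoli limit; the limit realizes $d(\bar p,\bar q)$. The dimension count descends from $\mathcal U$ since $\mathcal U$ is dense and $n$-dimensional.

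For claim (b), interior points of $\mathcal U$ carry $\kappa$-Alexandrov neighborhoods by assumption. At a boundary point $\bar p\in\overline{\mathcal U}\setminus\mathcal U$ fix a small $r>0$ and consider a geodesic triangle $\triangle xyz\subset B_r(\bar p)$ realized by limit geodesics from (a). Choose $x_i,y_i,z_i\in\mathcal U$ converging to the vertices and connect them by geodesics inside $\mathcal U$, which exist by the totally geodesic hypothesis. For $i$ large, each compact triangle $\triangle x_iy_iz_i$ sits in a region of $\mathcal U$ covered by finitely many $\kappa$-Alexandrov domains. The local-to-semilocal subdivision-and-chaining step of BGP (which does not itself require completeness) then yields $\kappa$-Toponogov comparison for $\triangle x_iy_iz_i$: midpoints of sides lie on geodesics in $\mathcal U$ and auxiliary geodesics between such midpoints also remain in $\mathcal U$, so every sub-triangle stays inside $\mathcal U$ and the chaining is legitimate. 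Passing to the limit $i\to\infty$, the comparison inequalities survive for $\triangle xyz$, so $B_r(\bar p)$ is a $\kappa$-Alexandrov domain in $\overline{\mathcal U}$. Combined with (a) this places $\overline{\mathcal U}$ in the hypotheses of the BGP globalization theorem, finishing the proof.

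The main obstacle is the chaining step in (b). BGP's argument triangulates a large triangle via midpoints and auxiliary geodesics between them; each small piece must lie inside a pre-existing local $\kappa$-Alexandrov domain. The danger specific to the incomplete setting is that midpoints or auxiliary geodesics could drift arbitrarily close to $\overline{\mathcal U}\setminus\mathcal U$, where no local domain is available. The totally geodesic hypothesis is precisely the tool that prevents this, so the entire subdivision scheme remains inside $\mathcal U$ and local comparison applies at every stage. Verifying uniformly in $i$ that the subdivision size can be chosen smaller than the Lebesgue number of the Alexandrov cover of $\triangle x_iy_iz_i$, and that this control survives the passage to the limit, is the delicate technical point; once this is in hand the argument is standard.
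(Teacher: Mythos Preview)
The paper does not supply a proof of this theorem; it is quoted as a result of Petrunin \cite{Pet16}. So the relevant comparison is with Petrunin's argument, whose whole point is that the BGP globalization machinery does \emph{not} carry over verbatim to the incomplete geodesic setting.

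Your proposal has a genuine gap, and it is exactly the gap you flag at the end and then wave away. In step~(b) you want, for each approximating triangle $\triangle x_iy_iz_i\subset\mathcal U$, to run the BGP subdivision/chaining scheme and obtain $\kappa$-comparison. But that scheme does not merely use the three sides of the given triangle: at every stage it manufactures auxiliary geodesics (vertex-to-midpoint, midpoint-to-midpoint, etc.), and those geodesics are not constrained to lie in any preassigned compact subset of $\mathcal U$. The totally geodesic hypothesis guarantees they stay in $\mathcal U$, but nothing prevents them from drifting toward $\overline{\mathcal U}\setminus\mathcal U$, where the radii of the available $\kappa$-Alexandrov domains shrink to zero. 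Consequently there is no Lebesgue-number control on the cover, the subdivision may never terminate, and the ``once this is in hand'' clause is precisely the content of the theorem rather than a routine verification. Put differently, if your step~(b) worked as stated, it would already show that $\kappa$-comparison holds for \emph{every} triangle with vertices in $\mathcal U$; passing to $\overline{\mathcal U}$ and invoking BGP would then be redundant, so the reduction is circular.

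Petrunin's proof in \cite{Pet16} confronts this directly: rather than trying to bound a Lebesgue number, he modifies the induction on the comparison radius so that the argument never needs uniform control near the metric boundary. That modification is short but essential, and is what distinguishes the geodesic-but-incomplete case from the complete case handled by BGP.
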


A basic example for the above result is that $\mathcal U$ is an open convex domain in $\dR^n$, $n\ge2$. In this case, if we take $\mathcal V=\mathcal U\setminus \{A\}$, where $A\subset \mathcal U$ and $\dim_\cH(A)\le n-2$, then $\overline{(\mathcal V, d_{\mathcal V})}=\overline{(\mathcal U, d_{\mathcal U})}\in \Alex^n(0)$. However, in general, we can't simply conclude $\overline{(\mathcal V, d_{\mathcal V})}=\overline{(\mathcal U, d_{\mathcal U})}$ and use Theorem \ref{t:Pet.glob} directly. This is in particular because we don't know wether points in $A$ admit Alexandrov domains in general. See Example \ref{eg.cube.edge} and the discussion below Property \ref{p:glob}. Nevertheless, we would like to have a theorem to handle this case.

Let $\mathcal U$ be a locally compact length metric space. Given a point $p\in\mathcal U$ and a subset $S\subseteq\mathcal U$, let
$$\cnnt{p}{S}=\Big\{q\in S: \text{there is a geodesic } \geod{pq}\subseteq\mathcal U \text{ connecting } p \text{ and } q \Big\}.$$
Some classical convexities can be rephrased as follows using this terminology. Assume $\dim_\cH(\mathcal U)=n<\infty$
\begin{itemize}
\setlength{\itemsep}{1pt}
  \item Convex:\quad
    $\geod{pq}\subseteq\mathcal U$ for every $p,q\in\mathcal U$.
    \quad$\Longleftrightarrow$\quad
    $\cnnt{p}{\mathcal U}=\mathcal U$ for every $p\in\mathcal U$.
  \item A.e.-convex:\quad
    $\geod{pq}\subseteq\mathcal U$ for almost every $p,q\in\mathcal U$.
    \\ \hphantom \qquad \quad$\Longleftrightarrow$\quad
    $\mathcal H^n(\mathcal U\setminus\cnnt{p}{\mathcal U})=0$ for almost every point $p\in\mathcal U$.
  \item Weakly a.e.-convex:\quad
    For every $p\in\mathcal U$, there exists $p_i\to p$ such that $\geod{p_iq}\subset\mathcal U$ for a.e. $q\in \mathcal U$.
    \\ \hphantom \qquad \quad$\Longleftrightarrow$\quad
    For every $p\in\mathcal U$, there exists $p_i\to p$ such that $\mathcal H^n(\mathcal U\setminus\cnnt{p_i}{\mathcal U})=0$.
  \item Weakly convex:\quad
    For every $p,q\in\mathcal U$, there exists $p_i\to p$ and $q_i\to q$ such that $\geod{p_iq_i}\subset\mathcal U$.
    \\ \hphantom \qquad\quad$\Longleftrightarrow$\quad
    For every $p, q\in \mathcal U$ and any $\epsilon>0$, there exists $p_1\in B_\epsilon(p)$ such that $\cnnt{p_1}{B_\epsilon(q)}\neq\varnothing$.
\end{itemize}
The following notion of probabilistic convexity was introduced in \cite{Li15-2}. Let $p\in\mathcal U$ and $\geod{qs}$ be a geodesic in $\mathcal U$. Consider the probability that a point on $\geod{qs}$ can be connected to $p$ by a geodesic in $\mathcal U$:
$${\bf Pr}\left(\geod{qs}^{*p}\right)=\frac{\mathcal
  H^1\left(\cnnt p{\geod{qs}}\right)} {\mathcal
  H^1\left(\geod{qs}\right)}.
$$
Here $\mathcal H^1$ denotes the $1$-dimensional Hausdorff measure. We say that $\mathcal U$ is weakly $\mathfrak p_\lambda$-convex if for any $p,q,s\in\mathcal U$ and any $\epsilon>0$, there are points $p_1\in B_\epsilon(p)$, $q_1\in B_\epsilon(q)$, $s_1\in B_\epsilon(s)$ and a geodesic $\geod{q_1s_1}\subset\bar{\mathcal U}$ so that ${\bf Pr}(\geod{q_1 s_1}^{*p_1})>\lambda-\epsilon$. By choosing the point $s\in B_\epsilon(q)$, we see that if $\lambda>0$, then
weak $\mathfrak p_\lambda$-convexity implies weak convexity. If $\mathcal U\in\Alex_{loc}(\kappa)$, then weak a.e.-convexity implies weak $\mathfrak p_1$-convexity (c.f. \cite{Li15-2}).

\begin{theorem}[\cite{Li15-2}]\label{t:Li15-2-p1}
  If $\mathcal U\in\Alex_{loc}(\kappa)$ is weakly $\mathfrak p_1$-convex, then its metric completion $\bar{\mathcal U}\in\Alex(\kappa)$.
\end{theorem}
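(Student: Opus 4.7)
My plan is to reduce Theorem \ref{t:Li15-2-p1} to Petrunin's Globalization Theorem (Theorem \ref{t:Pet.glob}) in spirit by using weak $\mathfrak p_1$-convexity to approximate arbitrary hinges in $\bar{\mathcal U}$ by hinges whose radial geodesics lie inside $\mathcal U$ on a set of full measure, and then to transport the local Toponogov comparison on $\mathcal U$ along those radial geodesics to a global hinge comparison on $\bar{\mathcal U}$. Since $\mathcal U$ is dense in $\bar{\mathcal U}$ and the comparison angle $\tilde\measuredangle_\kappa$ depends continuously on its arguments, it suffices to verify $\kappa$-hinge comparison for every hinge $(p,\geod{qs})\subset\bar{\mathcal U}$ with vertex $p$ and endpoints $q,s$ in $\mathcal U$.

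Fix such a hinge and $\epsilon>0$. By weak $\mathfrak p_1$-convexity I obtain points $p_1,q_1,s_1\in\mathcal U$, each within $\epsilon$ of the originals, and a geodesic $\gamma=\geod{q_1 s_1}\subset\bar{\mathcal U}$ such that $\mathbf{Pr}(\gamma^{*p_1})>1-\epsilon$; equivalently, the subset $T=\cnnt{p_1}{\gamma}\subset\gamma$ has $\cH^1$-measure at least $(1-\epsilon)\cH^1(\gamma)$, and for each $t\in T$ some geodesic $\geod{p_1 t}$ lies entirely in $\mathcal U$. For each such $t$ I would cover $\geod{p_1 t}$ by finitely many $\kappa$-Alexandrov domains inside $\mathcal U$ and run the classical BGP chain argument along this radial geodesic to obtain the infinitesimal form of $\kappa$-comparison at $t$---concretely, the one-sided derivative of the function $\psi(t)=\tilde\measuredangle_\kappa(p_1\,_{q_1}^{t})$ at $t\in T$ has the correct sign for monotonicity. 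The function $\psi\colon\gamma\to\dR$ is continuous, since the three distances that define it depend continuously on $t\in\gamma$. Therefore the monotonicity of $\psi$ established on the $\cH^1$-full-measure subset $T$ propagates by continuity to all of $\gamma$, which is precisely the $\kappa$-hinge comparison for $(p_1,\gamma)$.

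Letting $\epsilon\to 0$ and passing to a subsequential limit of $(p_1,q_1,s_1,\gamma)$, the approximating hinges converge to a hinge in $\bar{\mathcal U}$ joining $q$ to $s$ through $p$, and the hinge comparison passes to the limit to yield the desired inequality for $(p,\geod{qs})$; combined with finite-dimensionality this gives $\bar{\mathcal U}\in\Alex^n(\kappa)$. The main obstacle I expect is the middle step: the geodesic $\gamma$ is a geodesic only in $\bar{\mathcal U}$ and may cross completion points at which no $\kappa$-Alexandrov domain is a priori available, so the local comparison cannot be extracted from $\gamma$ itself but only from the radial geodesics $\geod{p_1 t}\subset\mathcal U$ with $t\in T$. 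The argument must therefore convert an infinitesimal condition holding at $\cH^1$-almost every $t\in\gamma$ into a genuine monotonicity statement on all of $\gamma$, and it is precisely here that the choice $\lambda=1$ rather than $\lambda<1$ is essential: only a $\cH^1$-null bad set $\gamma\setminus T$ can be bridged by continuity of $\psi$ with no jump.
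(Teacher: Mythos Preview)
The paper does not give a proof of this theorem; it merely states the result and cites \cite{Li15-2}. So there is no ``paper's own proof'' to compare against, and I will instead comment on the soundness of your sketch.

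Your overall architecture---approximate the triple $(p,q,s)$ by $(p_1,q_1,s_1)$ with ${\bf Pr}(\geod{q_1s_1}^{*p_1})>1-\epsilon$, establish an infinitesimal comparison at the good parameters $t\in T$ using that $\geod{p_1t}\subset\mathcal U$ is covered by finitely many $\kappa$-Alexandrov domains, integrate along $\gamma$, and pass to the limit---is indeed the scheme behind the result in \cite{Li15-2}. However, your justification of the integration step has a genuine gap.

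You write that ``the monotonicity of $\psi$ established on the $\cH^1$-full-measure subset $T$ propagates by continuity to all of $\gamma$,'' and later that ``only a $\cH^1$-null bad set $\gamma\setminus T$ can be bridged by continuity of $\psi$.'' Two problems. First, $T$ is \emph{not} of full measure: weak $\mathfrak p_1$-convexity only gives $\cH^1(T)>(1-\epsilon)\cH^1(\gamma)$, and $\gamma$, $p_1$, $q_1$, $s_1$ all depend on $\epsilon$, so you cannot simply send $\epsilon\to 0$ while holding $\gamma$ fixed. Second, and more seriously, continuity of $\psi$ alone is insufficient to pass from ``$\psi'\le 0$ on a set of full (or nearly full) measure'' to ``$\psi$ is (nearly) non-increasing''---the Cantor staircase has zero derivative almost everywhere yet is strictly increasing. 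What you actually need is that $t\mapsto d(p_1,\gamma(t))$ is $1$-Lipschitz, hence absolutely continuous; then the comparison-angle function you integrate is absolutely continuous as well, and the fundamental theorem of calculus gives
\[
\psi(\cH^1(\gamma))-\psi(0)=\int_T\psi'+\int_{\gamma\setminus T}\psi'\le 0+L\cdot\epsilon\,\cH^1(\gamma),
\]
i.e.\ an $O(\epsilon)$-approximate comparison for the approximating triple. Only after this do you let $\epsilon\to 0$ together with $(p_1,q_1,s_1)\to(p,q,s)$ to obtain the exact comparison in $\bar{\mathcal U}$. This is precisely where $\lambda=1$ is used: for $\lambda<1$ the residual term does not vanish. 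Your sketch has the right skeleton, but the phrase ``propagates by continuity'' hides the essential Lipschitz/absolute-continuity input and the correct order of limits; as written, that step is false.
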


\begin{corollary}[\cite{Li15-2}]\label{cor.t:Li15-2-p1}
If $\mathcal U\in\Alex_{loc}(\kappa)$ is weakly a.e.-convex then its metric completion $\bar{\mathcal U}\in\Alex(\kappa)$.
\end{corollary}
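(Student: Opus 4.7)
The plan is to deduce this corollary directly from Theorem \ref{t:Li15-2-p1}. Since that theorem already handles the passage from weak $\mathfrak p_1$-convexity to the Alexandrov property on $\bar{\mathcal U}$, the only thing to verify is the implication recorded just above the theorem statement: within $\Alex^n_{loc}(\kappa)$, weak a.e.-convexity implies weak $\mathfrak p_1$-convexity. Once that implication is in hand, the corollary is a one-line invocation of Theorem \ref{t:Li15-2-p1}.

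To establish the implication, I would fix $p,q,s\in\mathcal U$ and $\epsilon>0$, and produce $p_1\in B_\epsilon(p)$, $q_1\in B_\epsilon(q)$, $s_1\in B_\epsilon(s)$, and a geodesic $\geod{q_1s_1}\subset\bar{\mathcal U}$ along which $\mathcal H^1$-almost every point is joinable to $p_1$ by a geodesic in $\mathcal U$, giving $\bf Pr(\geod{q_1s_1}^{*p_1})=1>1-\epsilon$. The first step is immediate: by weak a.e.-convexity, choose $p_1\in B_\epsilon(p)$ with $E:=\mathcal U\setminus\cnnt{p_1}{\mathcal U}$ satisfying $\mathcal H^n(E)=0$. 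Next, fix a small $\kappa$-Alexandrov domain $U_q$ around $q$ and a perturbation $q_1\in B_\epsilon(q)\cap U_q$. Consider the ``geodesic fan'' from $q_1$ to points $s'$ in a small ball $B_\epsilon(s)\cap U_q$.

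The core technical step is a Fubini/coarea argument along this fan. Toponogov comparison inside $U_q$ provides a lower bound on how geodesics spread from $q_1$, hence a coarea-type estimate showing that the map $s'\mapsto \geod{q_1 s'}$ distributes $n$-dimensional volume over the space of endpoints with a controlled Jacobian (bounded below by the corresponding quantity in the model space $\mathds S^2_\kappa$ up to $(1+o(1))$ as the domain shrinks). Combining this with $\mathcal H^n(E)=0$ forces
\[
\mathcal H^1\!\bigl(\geod{q_1 s'}\cap E\bigr)=0 \quad\text{for }\mathcal H^n\text{-a.e. }s'\in B_\epsilon(s)\cap U_q.
\]
Picking any such $s_1$ gives the desired configuration.

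The main obstacle is precisely that coarea estimate, since in a local Alexandrov space one does not have the Riemannian smoothness required to write an honest Jacobian; the estimate must be extracted from $\kappa$-comparison alone, essentially by pushing the fan forward to the comparison model $\mathds S^2_\kappa$ and using Bishop-Gromov-type tube volume bounds for geodesics issuing from $q_1$. A secondary subtlety is that geodesics $\geod{q_1 s'}$ may branch, may fail to be unique, or may leave $U_q$ near their $p_1$-matching partners; this is handled by first localizing to a single Alexandrov domain, and then perturbing $p$, $q$, and $s$ simultaneously to avoid the measure-zero exceptional sets where any of these pathologies can occur, which is possible because $\mathcal H^n(E)=0$ is preserved under such perturbations by weak a.e.-convexity applied to each perturbed basepoint.
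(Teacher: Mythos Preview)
Your high-level plan matches the paper's exactly: the corollary is obtained from Theorem~\ref{t:Li15-2-p1} via the implication ``weak a.e.-convex $\Rightarrow$ weak $\mathfrak p_1$-convex,'' which the paper records just before the theorem and attributes to \cite{Li15-2} without reproving it here. So at the level of strategy there is nothing to add.

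Your sketch of that implication, however, has a localization slip that would need to be fixed. You place the perturbed endpoint $s'$ in $B_\epsilon(s)\cap U_q$, where $U_q$ is a $\kappa$-Alexandrov domain around $q$; but $s$ is an arbitrary point of $\mathcal U$, generally far from $q$, so this intersection is typically empty and no ``Toponogov comparison inside $U_q$'' is available for the full fan $\{[q_1s']\}$. The coarea/Fubini step cannot be run from a single comparison based at $q_1$. The correct organization is the one you see in Lemma~\ref{map.dim} (or in the Ricci-limit analogue above it): work near the obstacle, not near the vertex. Cover the null set $E=\mathcal U\setminus\cnnt{p_1}{\mathcal U}$ by small balls each contained in an Alexandrov domain, and use local Bishop--Gromov/Toponogov in those balls to bound the measure of endpoints $s'$ whose geodesic $[q_1s']$ meets a given ball; summing over the cover yields that $\cH^1([q_1s']\cap E)=0$ for a.e.\ $s'$. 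The existence of the geodesics $[q_1s']\subset\mathcal U$ themselves is supplied by applying weak a.e.-convexity a second time, now at $q$. With that reorganization the argument goes through.
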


If $\bar{\mathcal U}$ is known to be an Alexandrov space, then its optimal lower curvature bound is rigid, provided $\lambda>0$.

\begin{theorem}[\cite{Li15-2}]\label{t:Li15-2-p2}
  Suppose that $\mathcal U\in\Alex_{loc}(\kappa)$ is weakly $\mathfrak p_\lambda$-convex for some $\lambda>0$. If $\bar{\mathcal U}\in\Alex(\kappa_1)$ for some $\kappa_1>-\infty$, then $\bar{\mathcal U}\in\Alex(\kappa)$.
\end{theorem}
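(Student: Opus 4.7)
The plan is to verify the Toponogov $\kappa$-distance comparison for every geodesic triangle $\triangle pqs\subset\bar{\mathcal U}$. Since $\bar{\mathcal U}$ is already assumed to lie in $\Alex(\kappa_1)$, geodesics exist between all pairs of points and depend continuously on their endpoints; this lets me verify the $\kappa$-comparison on a dense family of configurations and then pass to the limit. I would also exploit the fact that the optimal curvature bound of an Alexandrov space is determined by its triangle comparison behavior at arbitrarily small scales, so the obstruction to upgrading from $\kappa_1$ to $\kappa$ is really about propagating the local $\kappa$-bound of $\mathcal U$ to long triangles in $\bar{\mathcal U}$.

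First I would fix $p,q,s\in\bar{\mathcal U}$ and $\epsilon>0$ and apply weak $\mathfrak p_\lambda$-convexity to produce $p_1\in B_\epsilon(p)$, $q_1\in B_\epsilon(q)$, $s_1\in B_\epsilon(s)$, and a geodesic $\sigma=\geod{q_1s_1}\subset\bar{\mathcal U}$ such that the good set $G:=\cnnt{p_1}{\sigma}$ has $\mathcal H^1$-measure at least $(\lambda-\epsilon)\cdot\mathcal H^1(\sigma)$. For each $m\in G$ there is a genuine geodesic $\geod{p_1m}\subset\mathcal U$. The sub-geodesic $\geod{q_1m}\subset\sigma$ is itself a geodesic of $\bar{\mathcal U}$, and $\geod{p_1q_1}\subset\bar{\mathcal U}$ exists because $\bar{\mathcal U}\in\Alex(\kappa_1)$, so $\triangle p_1q_1m$ is a bona fide geodesic triangle whose long side $\geod{p_1m}$ lies inside the locally $\kappa$-Alexandrov set $\mathcal U$.

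Next I would verify the $\kappa$-comparison for such a triangle. I would partition $\geod{p_1m}$ into finitely many sub-segments whose $r$-neighborhoods each lie in a single $\kappa$-Alexandrov domain $U_i\subset\mathcal U$, apply the local $\kappa$-Toponogov inequality on each piece, and concatenate the hinges following the inductive subdivision used in Petrunin's proof of Theorem \ref{t:Pet.glob}; this produces a global $\kappa$-distance comparison for $\triangle p_1q_1m$, and in particular the desired lower bound on $|p_1m|$ against the model distance $|\tilde p_1\tilde m|$ inside the $M^2_\kappa$-comparison triangle of $\triangle p_1q_1s_1$. Then, noting that $\mathcal H^1(G)>0$ with $\lambda-\epsilon>0$, I would re-apply weak $\mathfrak p_\lambda$-convexity on each complementary sub-arc of $\sigma$ to generate a dense subset of $\sigma$ on which the comparison holds; continuity of $m\mapsto|p_1m|$ in $\bar{\mathcal U}$ and of the model-distance function then extend the inequality to every $m\in\sigma$, giving $\kappa$-Toponogov for $\triangle p_1q_1s_1$. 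Finally, letting $\epsilon\to 0$ and using stability of geodesics in $\bar{\mathcal U}\in\Alex(\kappa_1)$, I would pass $(p_1,q_1,s_1)\to(p,q,s)$ and preserve the $\kappa$-comparison in the limit for the original triangle.

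The hard part will be the concatenation step. The local $\kappa$-bound on each domain $U_i$ only controls small triangles whose \emph{three} sides lie inside $U_i$, whereas in $\triangle p_1q_1m$ only the side along $\geod{p_1m}$ is a priori inside $\mathcal U$. Petrunin's inductive gluing of local comparisons into a long-triangle comparison was developed for totally geodesic subsets, and I expect to recover it here by a secondary application of weak $\mathfrak p_\lambda$-convexity to each sub-triangle, combined with a refinement that uses the ambient $\Alex(\kappa_1)$ geometry to keep the auxiliary sub-vertices and their connecting geodesics close to $\mathcal U$. The hypothesis $\kappa_1>-\infty$ is essential precisely here, because it prevents wild branching of approximating geodesics and guarantees the convergence of the nested subdivision, thereby allowing the local $\kappa$-inequalities in $\mathcal U$ to propagate to a genuine $\kappa$-bound on $\bar{\mathcal U}$.
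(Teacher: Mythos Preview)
The paper does not contain a proof of this theorem. This is a survey article, and Theorem~\ref{t:Li15-2-p2} is merely stated with a citation to \cite{Li15-2}; no argument is given or sketched here. Consequently there is no ``paper's own proof'' against which to compare your proposal.

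That said, a few comments on the proposal itself. Your overall architecture---perturb to $(p_1,q_1,s_1)$ via weak $\mathfrak p_\lambda$-convexity, verify $\kappa$-comparison on the perturbed triangle, then pass to the limit using the ambient $\Alex(\kappa_1)$ structure---is a reasonable shape. But two steps are not yet under control. First, your density claim: you write that you will ``re-apply weak $\mathfrak p_\lambda$-convexity on each complementary sub-arc of $\sigma$ to generate a dense subset of $\sigma$ on which the comparison holds.'' Re-applying the convexity hypothesis produces \emph{new} perturbed points $p_1',q_1',s_1'$ and a \emph{new} geodesic $\sigma'$, not additional good points on the original $\sigma$ with the original apex $p_1$. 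You would need an iterated construction together with a limiting argument to show that the good set for a \emph{fixed} apex and a \emph{fixed} base geodesic is dense, and that is not what the hypothesis gives directly. Second, the concatenation step you flag as ``the hard part'' is genuinely the crux, and your plan for it---a secondary application of weak $\mathfrak p_\lambda$-convexity to sub-triangles plus unspecified use of the $\Alex(\kappa_1)$ geometry---is not yet an argument. Petrunin's gluing in the totally geodesic case relies on all three sides of every sub-triangle being available inside the good set; here only one side is, and it is not clear your sketch bridges that gap.

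If you want to pursue this, a cleaner route may be to exploit more heavily that $\bar{\mathcal U}\in\Alex(\kappa_1)$ already gives you angles, first variation, and semicontinuity of comparison angles, so that the $\kappa$-comparison need only be checked on a set of triangles of positive measure (in an appropriate sense) rather than on a dense set obtained by iterated perturbation. In any case, the present paper offers no guidance, and you should consult \cite{Li15-2} for the intended argument.
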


 The answers for the following two questions remain unknown to the author.
 \begin{itemize}
   \item{\it Is Theorem \ref{t:Li15-2-p1} still true if $\mathcal U$ is weakly $\mathfrak p_\lambda$-convex for some $\lambda\in(0,1)$?}
   \item {\it Is Theorem \ref{t:Li15-2-p2} still true if $\mathfrak p_\lambda$-convexity is replaced by weak convexity?}
 \end{itemize}

Inspired by Conjecture \ref{conj:g.Alex} and the Globalization Problem, we have the following conjecture.

\begin{conjecture}\label{conj:glob.cone}
  Let $(X,d_X)$ be a length metric space and $\mathcal U\subset X$ be an open subset with $\cH^{n-1}(X\setminus \mathcal U)=0$. Assume that $(\mathcal U, d_{\mathcal U})\in\Alex^n_{loc}(\kappa)$ and for every $p\in X$, the tangent cone $\dsp T_pX=\lim_{r\to 0}(X,p,r^{-1}d)$ exists and isometric to a metric cone $C(\Sigma_p)$, where $\Sigma_p\in\Alex^{n-1}(1)$. Then $(X,d_X)\in\Alex^n(\kappa)$.
\end{conjecture}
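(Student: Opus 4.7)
The plan is to carry out the three-step strategy outlined just before the statement. Step (1), $(\mathcal U, d_{\mathcal U})\in\Alex^n_{loc}(\kappa)$, is hypothesis. For step (2) I aim to verify the weak a.e.-convexity hypothesis of Corollary \ref{cor.t:Li15-2-p1} so that $\bar{\mathcal U}\in\Alex^n(\kappa)$. For step (3) I will verify the hypothesis of Lemma \ref{l:metric_c} to conclude that the metric completion $\bar{\mathcal U}$ is isometric to $(X,d_X)$.

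For step (2), fix $p\in\mathcal U$ and choose a small ball $B_{r_0}(p)$ inside some $\kappa$-Alexandrov domain $U_p\ni p$ with $U_p\subseteq\mathcal U$. Inside $B_{r_0}(p)$ Toponogov comparison is available, so the proof of Lemma \ref{map.dim} applies to $E=(X\setminus\mathcal U)\cap B_{r_0}(p)$: since $\cH^{n-1}(X\setminus\mathcal U)=0$ forces $\dim_\cH(E)\le n-2$, the set of $q\in B_{r_0}(p)$ for which every geodesic $\geod{pq}\subseteq B_{r_0}(p)$ meets $E\setminus\{p\}$ has Hausdorff dimension at most $n-1$ and hence $\cH^n$-measure zero. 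To globalize from $B_{r_0}(p)$ to all of $\mathcal U$, cover a candidate curve from $p$ to a far $q\in\mathcal U$ by finitely many overlapping $\kappa$-Alexandrov balls inside $\mathcal U$ and chain the local statement through these balls, producing a $d_{\mathcal U}$-geodesic staying in $\mathcal U$. This yields $\cH^n(\mathcal U\setminus\cnnt{p}{\mathcal U})=0$, and Corollary \ref{cor.t:Li15-2-p1} then gives $\bar{\mathcal U}\in\Alex^n(\kappa)$.

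For step (3), Lemma \ref{l:metric_c} reduces the problem to showing that every pair $x,y\in X$ is approximable by $x',y'\in X$ admitting a geodesic $\geod{x'y'}\subseteq\mathcal U$. Approximability of $x\in X$ by points of $\mathcal U$ is immediate from $\cH^{n-1}(X\setminus\mathcal U)=0$, so the real issue is arranging the $X$-geodesic to avoid $X\setminus\mathcal U$. Here the tangent-cone hypothesis enters: for each $p\in X\setminus\mathcal U$, the rescaled pointed balls $(X,p,r^{-1}d_X)$ Gromov--Hausdorff converge to the Alexandrov cone $C(\Sigma_p)$, whose singular set has Hausdorff dimension at most $n-2$, so nearby $X$-geodesics through a neighborhood of $p$ can generically be perturbed to avoid $X\setminus\mathcal U$. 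Combined with the weak convexity already produced inside $\mathcal U$ and a compactness argument covering a candidate geodesic $\geod{xy}$ by finitely many such neighborhoods, this should yield the required $x',y'$.

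The main obstacle is executing this last perturbation rigorously. Lemma \ref{map.dim} is stated for a global Alexandrov space, whereas $X$ is the very object we are trying to prove Alexandrov. One therefore needs an infinitesimal form of the dimension-comparison argument that uses only the Alexandrov structure of each tangent cone $C(\Sigma_p)$ together with the local Alexandrov structure of $\mathcal U$, and then transfers the estimate back to balls in $X$ uniformly in scale. A secondary subtlety concerns step (2): the existing weak $\mathfrak p_\lambda$-convexity globalization is established only for $\lambda=1$, so if the chaining argument in step (2) produces only some $\lambda\in(0,1)$, then the first open question stated just after Theorem \ref{t:Li15-2-p2} must be resolved in parallel.
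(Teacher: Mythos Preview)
The statement you are attempting to prove is labeled a \emph{Conjecture} in the paper, and the paper offers no proof of it in general. What the paper does supply is the observation that the conjecture holds for Euclidean polytopes (where the cone hypothesis at a bad point $p$ gives local convexity of small balls $B_r(p)$, and local convexity globalizes) and the announcement (Theorem~\cite{LiGe19}) that it holds when $X\setminus\mathcal U$ is discrete. So there is no ``paper's own proof'' to compare against; your proposal is an attack on an open problem, and the honest obstacles you flag at the end are exactly why it remains open.

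Beyond those, your step (2) has a circularity you did not flag. You place $B_{r_0}(p)$ inside an Alexandrov domain $U_p\subseteq\mathcal U$, and then set $E=(X\setminus\mathcal U)\cap B_{r_0}(p)$; but this $E$ is empty, so the local application of Lemma~\ref{map.dim} is vacuous. What you actually need is to show that for a.e.\ $q\in\mathcal U$ the $d_{\mathcal U}$-geodesic $\geod{pq}$ exists, and that is a statement about \emph{long} geodesics in the incomplete space $(\mathcal U,d_{\mathcal U})$, equivalently about $d_X$-geodesics avoiding $X\setminus\mathcal U$. Your ``chaining through overlapping Alexandrov balls'' does not produce such geodesics: concatenating short minimizers along a covering of a candidate curve yields a broken path, not a minimizer, and straightening it is precisely the globalization problem you are trying to solve. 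Likewise, invoking the dimension-comparison Lemma~\ref{map.dim} on $X$ with $\Omega=X\setminus\mathcal U$ would do the job in one stroke, but that lemma requires Toponogov comparison on $X$, which is the conclusion, not the hypothesis. In the discrete case handled by \cite{LiGe19} these issues disappear because $X\setminus\mathcal U$ is locally a single point and the tangent-cone hypothesis can be leveraged pointwise; for a general $X\setminus\mathcal U$ with $\cH^{n-1}(X\setminus\mathcal U)=0$ no such localization is available, and the conjecture stands.
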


This conjecture is true for polytopes in Euclidean spaces. This is because the convexity of tangent cone at a point $p$ implies that $B_r(p)$ is convex for $r>0$ small. Then the result follows from that local convexity implies global convexity. More generally, we have the following result.

\begin{theorem}[\cite{LiGe19}]
  Conjecture \ref{conj:glob.cone} is true if $X\setminus \mathcal U$ is a discrete set.
\end{theorem}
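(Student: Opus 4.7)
The plan is to apply Corollary \ref{cor.t:Li15-2-p1} after establishing (a) $X$ is isometric to the metric completion $\overline{(\mathcal U, d_{\mathcal U})}$, and (b) $(\mathcal U, d_{\mathcal U})$ is weakly a.e.-convex. Both reductions rely on the discreteness of $S := X \setminus \mathcal U$ together with the metric-cone hypothesis $T_p X \cong C(\Sigma_p)$, $\Sigma_p \in \Alex^{n-1}(1)$, at each $p \in S$. Since $\mathcal U$ is open, the set $S$ is closed, and being discrete and closed it is locally finite in $X$; thus every compact geodesic in $X$ meets $S$ in only finitely many points. Moreover, the classical fact that antipodes in $\Alex^{n-1}(1)$ are unique whenever they exist (a pair at distance $\pi$ forces a spherical-suspension structure with those as the two suspension vertices) will supply the measure-theoretic slicing.

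First I would verify the hypothesis of Lemma \ref{l:metric_c}. Given $x, y \in X$, a geodesic $\gamma = [xy]$ meets $S$ in a finite set $\{p_1, \dots, p_N\}$, and the task is to produce nearby pairs $(x', y')$ admitting a geodesic $[x'y'] \subset \mathcal U$. For each $p_j \in S$, the geodesic--cone correspondence at $p_j$ identifies each $X$-geodesic through $p_j$ with a ray-pair in $C(\Sigma_{p_j})$ emanating in two antipodal directions at the apex. Hence the set of $(x, y) \in X \times X$ for which every $X$-geodesic $[xy]$ passes through $p_j$ is parameterized by a triple $(v, s_x, s_y) \in \Sigma_{p_j} \times (0, \infty) \times (0, \infty)$, where $v$ determines both $x$ (in direction $v$ at distance $s_x$) and $y$ (in the unique antipodal direction at distance $s_y$). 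This is an $(n+1)$-dimensional parameterization, hence of $\cH^{2n}$-measure zero in $X \times X$ for $n \ge 2$; the countable union over $S$ remains of measure zero. Consequently, pairs $(x', y')$ with some $[x'y'] \subset \mathcal U$ are dense near $(x, y)$, and Lemma \ref{l:metric_c} yields $X = \overline{(\mathcal U, d_{\mathcal U})}$.

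Next I would verify weak a.e.-convexity of $\mathcal U$ by an analogous slicing argument. Fix $p \in \mathcal U$ and set $p_i \equiv p$. For each $s \in S$, the set of $q \in X$ such that every $X$-geodesic $[pq]$ passes through $s$ lies on the (unique, by antipode uniqueness) extension of $[ps]$ past $s$, so it forms a one-dimensional set. The countable union over $s \in S$ therefore has $\cH^n$-measure zero in $X$, and for a.e.\ $q \in \mathcal U$ some geodesic $[pq]$ in $X$ avoids $S$ entirely. Since lengths of paths in $\mathcal U$ agree with their $X$-lengths and $d_X \le d_{\mathcal U}$, any such geodesic is also a geodesic in $(\mathcal U, d_{\mathcal U})$. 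Weak a.e.-convexity is therefore established, and Corollary \ref{cor.t:Li15-2-p1} yields $\overline{(\mathcal U, d_{\mathcal U})} \in \Alex^n(\kappa)$; combined with the previous step this gives $X \in \Alex^n(\kappa)$.

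The main obstacle is rigorously establishing the geodesic--cone correspondence underlying both dimension counts, since $X$ itself is not yet known to be Alexandrov. One must upgrade the pointed Gromov--Hausdorff convergence $(X, p, r^{-1} d) \to (C(\Sigma_p), o)$ to a first-variation-type statement asserting that every $X$-geodesic passing through $p$ corresponds, after blow-up, to a pair of rays in $C(\Sigma_p)$ whose directions at the apex are antipodal in $\Sigma_p$. Together with the $\Alex^{n-1}(1)$ antipode uniqueness, this is the essential technical input; once it is in place, the discreteness of $S$ reduces the remaining measure theory to a routine countable union of negligible sets, and Corollary \ref{cor.t:Li15-2-p1} closes the argument.
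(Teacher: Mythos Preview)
The paper does not actually prove this theorem; it is attributed to \cite{LiGe19}, listed as ``in preparation,'' and no argument is given beyond the one-line remark preceding the statement. That remark, however, points to a strategy quite different from yours: for polytopes (and, by implication, in the general discrete case) the idea is that the hypothesis $T_pX\cong C(\Sigma_p)$ with $\Sigma_p\in\Alex^{n-1}(1)$ should force a small ball $B_r(p)$ itself to be a $\kappa$-Alexandrov domain. Once this is known at each of the isolated singular points, $X$ is a \emph{complete} space in $\Alex^n_{loc}(\kappa)$, and the classical globalization theorem of \cite{BGP} applies directly---no appeal to Corollary~\ref{cor.t:Li15-2-p1} or to any convexity of $\mathcal U$ is needed. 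Your route through weak a.e.-convexity of the punctured space is therefore a genuinely different reduction.

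The gap you flag in your final paragraph is real and, in your scheme, unavoidable. Both of your dimension counts rest on the claim that an $X$-geodesic passing through $p\in S$ determines, under blow-up, a pair of antipodal directions in $\Sigma_p$; this is a first-variation statement that is standard once $X\in\Alex(\kappa)$ but is not available from the bare hypothesis that the pointed Gromov--Hausdorff limit $(X,p,r^{-1}d)\to C(\Sigma_p)$ exists. Without it you cannot justify the $(n+1)$-dimensional parametrization of geodesics through $p$, nor the Lipschitz control needed to push that parametrization to an $\cH^{2n}$-null set in $X\times X$. The indicated approach sidesteps this by arguing locally: near an isolated $p$ one expects to compare $X$ to its tangent cone well enough to verify Toponogov for small triangles directly, which is exactly the content one would anticipate in \cite{LiGe19}. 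In effect, both strategies must extract geometric information at the singular points from the cone hypothesis alone; the paper's hint localizes that task, whereas your approach requires a global first-variation input that is at least as hard.
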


\bibliographystyle{amsalpha}

\end{document}